\documentclass[reqno, 10pt]{amsart}

\usepackage{amsthm,amsmath,amsfonts,amssymb,epsfig,graphicx,latexsym,float,caption}
\usepackage{xcolor}

\newcommand {\mat}      [1] {\left[\begin{array}{#1}}
\newcommand {\rix}          {\end{array}\right]}
\addtolength{\textwidth}{2.5cm}
\addtolength{\oddsidemargin}{-1.2cm}
\addtolength{\evensidemargin}{-1.2cm}
\addtolength{\textheight}{1.2cm}
\addtolength{\topmargin}{-0.5cm}

\newtheorem{definition}{Definition}
\newtheorem{theorem}{Theorem}
\newtheorem{lemma}{Lemma}
\newtheorem{remark}{Remark}

\begin{document}

\title[MOR for pHDAE]
{\fontsize{12}{15}\selectfont Model reduction techniques for linear constant coefficient port-Hamiltonian differential-algebraic systems}

\author[S.~Hauschild]{Sarah-Alexa Hauschild$^{1,\star}$}
\author[N.~Marheineke]{Nicole Marheineke$^{1}$}
\author[V.~Mehrmann]{Volker Mehrmann$^{2}$}

\date{\today\\
$^\star$ \textit{Corresponding author}, email: hauschild@uni-trier.de, phone: +49\,651\,201\,4788\\
$^1$ Universit\"at Trier, Lehrstuhl Modellierung und Numerik, Universit\"atsring.\ 15, D-54296 Trier, Germany\\
$^2$ TU Berlin, Institut f\"ur Mathematik, Stra{\ss}e des 17.\ Juni 136, D-10623 Berlin, Germany
}

\begin{abstract}\normalsize
Port-based network modeling of multi-physics problems leads naturally to a formulation as port-Hamiltonian differential-algebraic system. In this way, the physical properties are directly encoded in the structure of the model. Since the state space dimension of such systems may be very large, in particular when the model is a space-discretized partial differential-algebraic system, in optimization and control there is a need for model reduction methods that preserve the port-Hamiltonian structure while keeping the (explicit and implicit) algebraic constraints unchanged. To combine model reduction for differential-algebraic equations with port-Hamiltonian structure preservation, we adapt two classes of techniques (reduction of the Dirac structure and moment matching) to handle port-Hamiltonian differential-algebraic equations. The performance of the methods is investigated for benchmark examples originating from  semi-discretized flow problems and mechanical multibody systems.
\end{abstract}

\maketitle

\noindent
{\sc Keywords.} structure-preserving model reduction; index reduction; port-Hamiltonian  differential-algebraic system; moment matching; effort constraint method; flow constraint method\\
{\sc AMS-Classification. 34H05, 41A22,  65L80, 93A15, 65F22} \\

\section{Introduction}\label{sec_0_intro}
\noindent
Port-Hamiltonian differential-algebraic systems (pHDAEs) arise from port-based network modeling of multi-physics problems. For this, a physical system is decomposed into smaller subsystems that are interconnected through energy exchange. The subsystems may belong to various different physical domains, such as electrical, mechanical, or hydraulic ones. The energy-based formulation is advantageous as it brings different scales on a single level, the port-Hamiltonian character is inherited by the coupling and the physical properties (e.g., stability, passivity, energy and momentum conservation) are encoded directly in the structure of the pH model equations \cite{BeaMXZ18, Sch13}. Algebraic constraints naturally come from the interconnections in form of network conditions, such as Kirchhoff's laws, or from constraints that are directly modeled, like position or velocity constraints in mechanical systems, or from mass balances in chemical engineering problems, see e.g., \cite{BreCP96, KunM06, Ria08}. The state space dimension of pHDAEs can be very large, e.g., for constraint finite element models in structural mechanics \cite{GraMQSW16}, semi-discretized problems arising in fluid dynamics \cite{EggKLMM18, EmmM13, HeiSS08, Sty06} or multibody problems \cite{MehS05}. In this case, for optimization and control, model order reduction techniques are needed that preserve the port-Hamiltonian structure and keep the explicit and hidden algebraic constraints unchanged. To present such methods and to study their properties is the main topic of this paper, which brings together model reduction for differential-algebraic equations with structure preservation.

The properties of pHDAEs have recently been studied in \cite{BeaMXZ18, Sch13}.  For systems of port-Hamiltonian ordinary differential equations (pHODEs), structure-preserving reduction methods have been developed based on ideas of tangential interpolation \cite{GugPBS12, GugPBS09}, moment matching \cite{PolS10, PolS11, WolLEK10} as well as effort and flow constraint reduction methods \cite{PolS12}. Structure-preserving model reduction for nonlinear systems has been studied in \cite{ChaBG16} and for linear damped wave equations in \cite{EggKLMM18}, where particular Galerkin projections have been constructed for the pHDAEs arising in gas transport networks. Surveys of model reduction techniques for general DAEs are given in \cite{BenMS05, BenS17}. A crucial step for model reduction is that the dynamic and algebraic equations are exactly identified and only the dynamic equations are reduced, otherwise the system may loose important properties, such as stability or passivity.

In this paper we generalize structure-preserving techniques that have been developed for pHODEs to pHDAEs. We focus on the effort and flow constraint model reduction methods as well as on moment matching. To do this, we follow the regularization concept of \cite{BeaMXZ18} to identify and decouple the algebraic constraints and the dynamical equations in a structure-preserving manner to develop the corresponding reduction methods. To illustrate the performance of the reduction methods, we apply them  to benchmark problems originating from  semi-discretized flow calculations and  multibody systems.  We discuss the advantages but also the limitations of these methods for pHDAEs.

This paper is structured as follows. The model framework of port-Hamiltonian differential-algebraic systems and a structure-preserving regularization concept is presented in Section~\ref{sec:2}. Following this, structure-preserving reduction techniques for the differential-algebraic pH systems are  generalized from their ordinary differential equation counterparts in Section~\ref{sec:3}. The performance of the methods is numerically investigated on the basis of various benchmark examples in Section~\ref{sec:4}. The paper closes with a summary in Section~\ref{sec:5}.

\section{Model framework of port-Hamiltonian differential-algebraic systems}\label{sec:2}
\noindent
In this section we review the structural properties  and simplified representations of pHDAEs according to \cite{BeaMXZ18} and \cite{SchM18}.  In particular, we study the decoupling of the dynamic and algebraic equations and variables, that will be used in the next section to derive structure-preserving model reduction techniques.

\subsection{Port-Hamiltonian systems}
\noindent Port-Hamiltonian systems  can be derived in two different ways, via a formulation as descriptor systems with special structured coefficient matrices or via an energy-based formulation on top of a Dirac structure. Since each formulation is a basis for a model reduction technique, we briefly discuss their relation.
\begin{definition}[pHDAE]\label{pHDAEDEF} A linear constant coefficient DAE system of the form
\begin{equation}\label{BEAMXZ18}
\begin{aligned}
E\dot{x}&=(J-R)\,Qx+(B-P)\,u,\\
y&=(B+P)^T\,Qx+(S+N)\,u,
\end{aligned}
\end{equation}
with $E$, $Q$, $J$, $R\in\mathbb{R}^{n\times n}$, $B$, $P\in\mathbb{R}^{n\times m}$, $S=S^T$, $N=-N^T\in\mathbb{R}^{m\times m}$, on a compact interval $\mathbb I\subset \mathbb R$ is called a \emph{port-Hamiltonian differential-algebraic system (pHDAE)} if the following properties are satisfied.
\begin{enumerate}
\item The differential-algebraic operator
\begin{align*}
Q^TE\frac{d}{dt}-Q^TJQ : \mathcal{X}\subset \mathcal{C}^1(\mathbb{I},\mathbb{R}^n)\rightarrow \mathcal{C}^0(\mathbb{I},\mathbb{R}^n)
\end{align*}
is \emph{skew-adjoint}, i.e., we have that
$Q^TJ^TQ=-Q^TJQ$ and $Q^TE=E^TQ$,
\item the product $Q^TE=E^TQ$ is positive semidefinite, i.e., $Q^TE=E^TQ\geq 0$, and
\item the \emph{passivity} matrix
\begin{align*}
W=\begin{bmatrix}Q^TRQ & Q^TP\\ P^TQ & S\end{bmatrix}\in\mathbb{R}^{(n+m)\times(n+m)}
\end{align*}
is symmetric positive semi-definite, i.e., $W=W^T\geq 0$.
\end{enumerate}
The quadratic \emph{Hamiltonian function} ${\mathcal H}:\mathbb{R}^n\rightarrow\mathbb{R}$ of the system is given by
\begin{align}\label{HamDae}
{\mathcal H}(x)=\frac{1}{2}x^TQ^TEx.
\end{align}
\end{definition}
\begin{theorem}\label{th:pHDAE}
Consider a pHDAE of the form \eqref{BEAMXZ18}. If for given input function $u$ the system has a (classical) solution $x\in \mathcal{C}^1(\mathbb{I},\mathbb{R}^n)$ in $\mathbb{I}$, then
\begin{align*}
\frac{d}{dt}{\mathcal H}(x)=u^Ty-\begin{bmatrix}x\\u\end{bmatrix}^TW\begin{bmatrix}x\\u\end{bmatrix}
\end{align*}
Furthermore, if $W=0$, then $\frac{d}{dt}{\mathcal H}(x)=u^Ty$.
\end{theorem}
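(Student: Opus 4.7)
The approach is a direct energy-balance computation: I would differentiate $\mathcal{H}$ along a classical solution and then match the resulting expression with $u^T y$ minus the quadratic form in $W$. The proof is purely algebraic and uses each of the three structural assumptions of Definition~\ref{pHDAEDEF}, together with the state and output equations in \eqref{BEAMXZ18}.

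The first step is to apply the product rule to $\mathcal{H}(x)=\tfrac{1}{2}x^T Q^T E x$. Property~(1) gives $Q^T E = E^T Q$, so $Q^T E$ is symmetric and the two terms produced by the product rule coalesce into $x^T Q^T E \dot x$. Substituting the state equation from \eqref{BEAMXZ18} produces $x^T Q^T(J-R)Qx + x^T Q^T(B-P)u$. The skew-adjointness relation $Q^T J^T Q = -Q^T J Q$ of property~(1) makes $Q^T J Q$ skew-symmetric, so the $J$-term vanishes and one is left with
\[
\frac{d}{dt}\mathcal{H}(x) = -x^T Q^T R Q x + x^T Q^T B u - x^T Q^T P u.
\]

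The second step is to expand $u^T y$ using the output equation in \eqref{BEAMXZ18}. Since $N=-N^T$, the contribution $u^T N u$ vanishes and $u^T y$ reduces to $x^T Q^T B u + x^T Q^T P u + u^T S u$. Block-expanding the quadratic form against the passivity matrix $W$ gives $x^T Q^T R Q x + 2\, x^T Q^T P u + u^T S u$. Subtracting these two expressions and comparing with the formula displayed above confirms the identity. The special case $W=0$ follows immediately.

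No serious obstacle is anticipated; the only mild subtlety is recognizing that it is the product $Q^T E$ (rather than $E$ alone) that must be symmetric in order to absorb the factor $\tfrac{1}{2}$ coming from the product rule, which is precisely what property~(1) provides. The semi-definiteness hypotheses in properties~(2) and~(3) are not used at any point in the derivation of the identity itself; they would only enter when one wishes to extract monotonicity or passivity-type consequences from the power balance.
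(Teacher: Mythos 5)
Your computation is correct and complete: differentiating $\mathcal{H}$, using the symmetry of $Q^TE$, the skew-symmetry of $Q^TJQ$ and $N$, and block-expanding the quadratic form in $W$ yields exactly the stated power balance. The paper itself does not print a proof of Theorem~\ref{th:pHDAE} (it is taken from \cite{BeaMXZ18}), but your direct energy-balance argument is precisely the standard one used there, and your remark that the semi-definiteness in properties~(2) and~(3) is not needed for the identity itself is accurate.
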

Theorem~\ref{th:pHDAE} implies some important properties of a pHDAE. First of all, its Hamiltonian is an \emph{energy storage function}, and the system is passive. A pHDAE satisfies a dissipation inequality. Furthermore, it is implicitly \emph{Lyapunov stable} as $\mathcal H$ defines a \emph{Lyapunov function}. The physical properties are encoded in the algebraic structure of the coefficient matrices and the geometric structures associated with the flow of the system. In this sense, $E^TQ$ is the \emph{energy matrix}, $Q^TRQ$ is the \emph{dissipation matrix}, $Q^TJQ$ the \emph{structure matrix} describing the energy flux among the energy storage elements, $B  \pm P$ are the \emph{port matrices} for energy in-  and output, and  $S$, $N$ are the matrices associated to a \emph{direct feed-through} from input $u$ to output $y$. In the case that $E=I$ is the identity matrix, the pHDAE reduces to a standard pHODE as studied in \cite{SchJ14}.

In the alternative \emph{energy-based formulation} a port-Hamiltonian system is characterized by the fact that the \emph{flow and effort variables} of its energy-storing port, its energy-dissipating port and its external port are linked together in a power-conserving manner by a \emph{Dirac structure}. Given a finite-dimensional linear space $\mathcal{F}$ with its dual space $\mathcal{E}=\mathcal{F}^*$, a Dirac structure is a subset $\mathcal{D}\subset\mathcal{F}\times\mathcal{E}$ satisfying $e^Tf=0$ for all $(f,e)\in\mathcal{D}$ and $\dim\mathcal{D}=\dim\mathcal{F}$. For a pHDAE in the form  \eqref{BEAMXZ18}, the flow and effort variables are defined on $\mathcal{F}=\mathcal{F}_x\times \mathcal{F}_R \times \mathcal{F}_P$ and $\mathcal{E}=\mathcal{E}_x\times \mathcal{E}_R \times \mathcal{E}_P\subset \mathbb{R}^n \times \mathbb{R}^{n+m} \times \mathbb{R}^m$, respectively. They are given by
\begin{align*}
&f_x=-E\dot x, \quad e_x=Qx, \quad f_R=- \begin{bmatrix}R & P\\P^T&S \end{bmatrix} e_R, \quad e_R=\begin{bmatrix}Qx\\u\end{bmatrix}, \quad f_P=y, \quad e_P=u,\\
&((f_x,f_R,f_P),(e_x,e_R,e_P))\in \mathcal{D}\subset \mathcal{F}\times \mathcal{E} \text{ for all }t \in \mathbb{I}.
\end{align*}
The variables $(f_x,e_x)\in \mathcal{F}_x\times \mathcal{E}_x$ of the energy-storing port are related to the evolution of the state and the Hamiltonian $\mathcal H$. If $E=I$, then the constitutive relations  read  as $(f_x,e_x)=(-\dot x, \nabla_x {\mathcal H}(x))$. The port variables of the energy-dissipating elements satisfy a resistive relation, $e_R^Tf_R\leq 0$ for all $(f_R,e_R)\in \mathcal{R} \subset\mathcal{F}_R\times\mathcal{E}_R$, which is encoded in the stated positive semidefinite matrix $R$. The external port variables $(f_P,e_P)\in \mathcal{F}_P\times \mathcal{E}_P$ correspond to the out- and inputs of the system. The energy-conservation property follows directly from Theorem~\ref{th:pHDAE}. Based on the notion of the Dirac structure, the pH system possesses a DAE representation, see \cite{SchJ14, SchM18}, i.e.,
\begin{align}\label{DAErep}
-F_x\,f_x=E_x \,e_x+F_R\,f_R+E_R\,e_R+F_P\,f_P+E_P\,e_P && \text{ for all }t \in \mathbb{I}
\end{align}
with matrices $F_x,E_x\in\mathbb{R}^{q\times n},$ $F_R,E_R\in\mathbb{R}^{q\times (n+m)}$ and $F_P,E_P\in\mathbb{R}^{q\times m}$ where $q=n+(n+m)+m$ and $\sum_{i=x,R,P} E_i F_i^T+ F_i E_i^T =0$.

\subsection{Structure-preserving regularization}\label{sec:decoupling}
\noindent A pHDAE system typically contains explicit as well as implicit (hidden) constraints. Since in model reduction all constraints need to be kept unchanged in order not to destroy crucial properties, we need to identify all constraints. If the differentiation-index is larger than one, then an index reduction, e.g., via derivative arrays or minimal extension, should be performed, see \cite{KunM06} for general DAEs. For pHDAEs this index reduction has to be performed in a structure-preserving way, see \cite{BeaMXZ18}.  It has been shown in  \cite{MehMW18} for the linear constant coefficient case and in \cite{Sch17_ppt} for the linear time-varying case that the differentiation-index will be at most two, i.e., in simple terms at most the second derivative of the input function $u$ is required to transform the system into a pHODE.
In contrast to the numerical solution of pHDAEs/pHODEs via time-integration methods, which is still partially an open problem, see, e.g. \cite{KotL18}, in the context of model reduction also a structure-preserving decoupling of the dynamic and algebraic variables should be performed. This may be a very critical step for linear time-varying or nonlinear systems, since it may require time-varying changes of variables, with all its difficulties, in particular of having to provide derivatives of the transformation functions \cite{KunM06}. But even in the case of constant coefficients, changes of variables with ill-conditioned transformation matrices may have to be handled.

In the following we discuss the structure-preserving regularization for linear constant coefficient pHDAEs of differentiation-index one or two. We refer to \cite{BeaMXZ18} for a detailed analysis of the regularization concept for linear time-varying systems. The concept is particularly based on the fact that the port-Hamiltonian structure and the associated Hamiltonian are preserved under basis change and scaling with invertible matrices (cf.\ Lemma~\ref{TransfoTheo}).
\begin{lemma}\label{TransfoTheo}
Consider a pHDAE of the form \eqref{BEAMXZ18} with Hamiltonian $\mathcal H$ \eqref{HamDae}. Let $U$, $V\in\mathbb{R}^{n\times n}$ be invertible. Then the transformed system
\begin{align*}
\tilde{E}\dot{\tilde{x}}&=(\tilde{J}-\tilde{R})\tilde{Q}\tilde{x}+(\tilde{B}-\tilde{P})u,\\
y&=(\tilde{B}+\tilde{P})^T\tilde{Q}\tilde{x}+(S+N)u
\end{align*}
with $\tilde{E}=U^TEV$, $\tilde{J}=U^TJU$, $\tilde{R}=U^TRU$, $\tilde{B}=U^TB$, $\tilde{P}=U^TP$, as well as $\tilde{Q}=U^{-1}QV$ and $\tilde{x}=V^{-1}x$ is still a pHDAE with the same Hamiltonian $\tilde{\mathcal H}(\tilde{x})=\frac{1}{2}\tilde{x}^T\tilde{Q}^T\tilde{E}\tilde{x}={\mathcal H}(x)$.
\end{lemma}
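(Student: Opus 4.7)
The plan is to verify the three pHDAE conditions of Definition \ref{pHDAEDEF} directly for the transformed matrices, then compute the transformed Hamiltonian by substitution. The key observation is that the asymmetric choice $\tilde{Q} = U^{-1}QV$ is tailored so that whenever $\tilde{Q}^T = V^T Q^T U^{-T}$ is multiplied against a quantity of the form $U^T(\cdot)$, the factor $U^{-T}U^T = I$ cancels and only a congruence by $V$ remains.

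First I would derive the transformed equations. Substituting $x = V\tilde{x}$ into \eqref{BEAMXZ18} and left-multiplying by $U^T$ yields
\begin{equation*}
U^TEV\dot{\tilde{x}} = U^T(J-R)U\cdot U^{-1}QV\tilde{x} + U^T(B-P)u,
\end{equation*}
which is the claimed state equation; the output equation follows from $(B+P)^TQV = (U^T(B+P))^T(U^{-1}QV)$. Next I would check condition (1): the identity $\tilde{Q}^T\tilde{E} = V^TQ^TU^{-T}U^TEV = V^TQ^TEV$, and the analogous one for $\tilde{E}^T\tilde{Q}$, reduces symmetry of $\tilde{Q}^T\tilde{E}$ to symmetry of $Q^TE$. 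The same cancellation gives $\tilde{Q}^T\tilde{J}\tilde{Q} = V^TQ^TJQV$, whence $\tilde{Q}^T\tilde{J}^T\tilde{Q} = -\tilde{Q}^T\tilde{J}\tilde{Q}$ inherits skew-symmetry from $Q^TJQ$. Condition (2) is then immediate because $V^TQ^TEV$ is congruent to $Q^TE \geq 0$ via the invertible matrix $V$.

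For condition (3), the same cancellations produce $\tilde{Q}^T\tilde{R}\tilde{Q} = V^TQ^TRQV$ and $\tilde{Q}^T\tilde{P} = V^TQ^TP$, so that the transformed passivity matrix satisfies
\begin{equation*}
\tilde{W} = \begin{bmatrix} V^T & 0 \\ 0 & I \end{bmatrix} W \begin{bmatrix} V & 0 \\ 0 & I \end{bmatrix},
\end{equation*}
which is congruent to $W \geq 0$ through an invertible block-diagonal matrix and therefore positive semidefinite. Finally, the Hamiltonian is obtained by direct substitution,
\begin{equation*}
\tilde{\mathcal H}(\tilde{x}) = \tfrac{1}{2}\tilde{x}^T\tilde{Q}^T\tilde{E}\tilde{x} = \tfrac{1}{2}x^TV^{-T}(V^TQ^TEV)V^{-1}x = \tfrac{1}{2}x^TQ^TEx = \mathcal{H}(x).
\end{equation*}

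The only point requiring care is keeping the $U$- and $V$-factors properly tracked: although the transformations of $E$, $J$, $R$, $B$, $P$ all involve $U^T$ on the left, the three pHDAE conditions and the Hamiltonian each reduce to a congruence in $V$ alone, which is the structural reason for defining $\tilde{Q} = U^{-1}QV$ instead of $\tilde{Q} = U^TQV$. Beyond this bookkeeping the argument is routine, since positive semidefiniteness, symmetry, and skew-symmetry are all preserved under congruence by invertible matrices.
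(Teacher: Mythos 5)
Your proof is correct. The paper itself states Lemma~\ref{TransfoTheo} without proof (deferring to the regularization framework of \cite{BeaMXZ18}), and your direct verification — deriving the transformed equations by substituting $x=V\tilde{x}$ and left-multiplying by $U^T$, then checking that each of the three defining conditions reduces to a congruence by $V$ (or $\mathrm{diag}(V,I)$ for the passivity matrix $W$) thanks to the cancellation $U^{-T}U^T=I$ built into the choice $\tilde{Q}=U^{-1}QV$ — is exactly the standard argument the authors are implicitly relying on. Nothing is missing.
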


\subsubsection*{Decoupling of pHDAE of index at most one}
For the decoupling of a pHDAE \eqref{BEAMXZ18} of index at most one, two orthogonal matrices $\tilde{U}$, $V\in \mathbb{R}^{n\times n}$ are determined (e.g., via a singular decomposition) such that
\begin{equation}\label{SVDE}
\tilde{U}^TEV=\begin{bmatrix}E_{11} & 0\\0&0\end{bmatrix}
\end{equation}
with $E_{11}$ invertible. We set $L=J-R$ and apply the transformation induced by $\tilde{U}$ and $V$ to \eqref{BEAMXZ18} (cf.\ Lemma~\ref{TransfoTheo}). Partitioning as in \eqref{SVDE} yields block-structured system matrices whose blocks we denote by $\tilde{\quad}$ in case that they change in the decoupling procedure. As $Q^TE$ is real symmetric, we have $Q^T_{11}E_{11}=E^T_{11}Q_{11}$ and $Q_{12}=0$. Furthermore, as the system is of differentiation-index at most one, the block matrix $L_{22}Q_{22}$ is either not present -- in case of an implicitly defined pHODE -- or it is invertible, i.e., $L_{22}$ and $Q_{22}$ both are invertible.
Setting $U=\tilde{U}T$ with
\begin{align*}
T=\begin{bmatrix} I&0\\T_{21}&I\end{bmatrix}, \quad T_{21}=-(L_{22})^{-T}(\tilde{L}_{12})^T
\end{align*}
and transforming \eqref{BEAMXZ18} with $U$ and $V$ as in Lemma~\ref{TransfoTheo} yields the  block-structured pHDAE
\begin{equation}\label{Index1Transf}
\begin{aligned}
\begin{bmatrix}E_{11}&0\\0&0\end{bmatrix}\begin{bmatrix}\dot{x}_1\\ \dot{x}_2\end{bmatrix}&=\begin{bmatrix}{L}_{11}
&0\\{L}_{21}&L_{22}\end{bmatrix}\begin{bmatrix}Q_{11}&0\\{Q}_{21}&Q_{22}\end{bmatrix}\begin{bmatrix}x_1\\x_2
\end{bmatrix}+\left(\begin{bmatrix}B_1\\B_2\end{bmatrix}-
\begin{bmatrix}P_1\\P_2\end{bmatrix}\right)u\\
y&=\left(\begin{bmatrix}B_1\\B_2\end{bmatrix}+\begin{bmatrix}P_1\\P_2\end{bmatrix}\right)^T
\begin{bmatrix}Q_{11}&0\\Q_{21}&Q_{22}\end{bmatrix}
\begin{bmatrix}x_1\\x_2\end{bmatrix}+(S+N)u,
\end{aligned}
\end{equation}
where $L_{ij}=J_{ij}-R_{ij}$.

\begin{theorem}[Decoupled pHDAE]\label{pHDAE1st}
Suppose that the pHDAE \eqref{BEAMXZ18} is of differentiation-index at most one. Let the system be transformed to the form \eqref{Index1Transf} via $U$ and $V$ and define $V^{-1}x=[x_1^T \,\, x_2^T]^T$. Then for any input $u$ and initial condition $x_1(t_0) = x_{1,0}$, the output $y$ and the state $x_1$ of \eqref{Index1Transf} are given by the implicit pHODE
\begin{equation*}
\begin{aligned}
E_{11}\,\dot{x}_1&=({J}_{11}-{R}_{11})Q_{11}\,x_1+(\hat{B}_1-\hat{P}_1)\,u,\\
y&=(\hat{B}_1+\hat{P}_1)^TQ_{11}\,x_1+(\hat{S}+\hat{N})\,u
\end{aligned}
\end{equation*}
with Hamiltonian $\hat{\mathcal H}(x_1)=\frac{1}{2}x_1^TQ^T_{11}E_{11}x_1={\mathcal H}(x)$, and coefficients
\begin{align*}
\hat{B}_1&=B_1-\frac{1}{2} L^T_{21} L_{22}^{-T}(B_2+P_2),\\
\hat{P}_1&=P_1-\frac{1}{2} L^T_{21} L_{22}^{-T}(B_2+P_2),\\
\hat{S}&=S-\frac{1}{2}[(B_2+P_2)^TL_{22}^{-1}(B_2-P_2)+(B_2-P_2)^TL_{22}^{-T}(B_2+P_2)],\\
\hat{N}&=N-\frac{1}{2}[(B_2+P_2)^TL_{22}^{-1}(B_2-P_2)-(B_2-P_2)^TL_{22}^{-T}(B_2+P_2)].
\end{align*}
The state $x_2$ is uniquely determined by the explicit algebraic constraint
\begin{align*}
L_{22}Q_{22}\,x_2=-({L}_{21}Q_{11}+L_{22}{Q}_{21})\,x_1-(B_2-P_2)\,u,
\end{align*}
which implies a consistency constraint for the respective initial condition.
\end{theorem}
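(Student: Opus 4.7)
The plan is to exploit the lower block-triangular structure of \eqref{Index1Transf}. The symmetry $Q^TE = E^TQ$ together with the block form $\tilde U^TEV = \mathrm{diag}(E_{11},0)$ and the invertibility of $E_{11}$ forces $Q_{12}=0$, and the auxiliary transformation $T$ was chosen precisely to zero out the $(1,2)$ block of $L=J-R$, so the first block row of \eqref{Index1Transf} contains no trace of $x_2$ at all. Under the index-one hypothesis $L_{22}Q_{22}$ is invertible, hence so is $L_{22}$, so the second block row is purely algebraic and uniquely determines $x_2$ through
\begin{align*}
L_{22}Q_{22}\,x_2 = -(L_{21}Q_{11} + L_{22}Q_{21})\,x_1 - (B_2-P_2)\,u,
\end{align*}
imposing the stated consistency restriction on the initial value $x_{1,0}$.

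For the dynamic equation, one reads off from the definitions that $\hat B_1 - \hat P_1 = B_1 - P_1$, so the first block row of \eqref{Index1Transf} already has the claimed form $E_{11}\dot x_1 = (J_{11}-R_{11})Q_{11}x_1 + (\hat B_1-\hat P_1)u$ without further manipulation. The substantive step is the output equation. Solving the algebraic constraint for $Q_{22}x_2$ and grouping terms yields
\begin{align*}
Q_{21}x_1 + Q_{22}x_2 = -L_{22}^{-1}L_{21}Q_{11}x_1 - L_{22}^{-1}(B_2-P_2)\,u,
\end{align*}
which on insertion into $y = (B_1+P_1)^TQ_{11}x_1 + (B_2+P_2)^T(Q_{21}x_1 + Q_{22}x_2) + (S+N)u$ produces the coefficient $(B_1+P_1)^T - (B_2+P_2)^TL_{22}^{-1}L_{21}$ in front of $Q_{11}x_1$, which is exactly $(\hat B_1 + \hat P_1)^T$, together with the feedthrough coefficient $(S+N) - (B_2+P_2)^TL_{22}^{-1}(B_2-P_2)$.

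The one place a small calculation is required is the symmetric/skew-symmetric splitting of this feedthrough term. Writing $M := (B_2+P_2)^TL_{22}^{-1}(B_2-P_2)$, which is in general neither symmetric nor skew, the decomposition $M = \tfrac12(M+M^T) + \tfrac12(M-M^T)$ matches the bracketed expressions in the definitions of $\hat S$ and $\hat N$ exactly, so $\hat S = \hat S^T$ and $\hat N = -\hat N^T$ inherit the structure of $S$ and $N$, and $\hat S + \hat N = (S+N) - M$ as required. The Hamiltonian statement is immediate: the block form of $\tilde Q^T\tilde E$ collapses to $\mathrm{diag}(Q_{11}^TE_{11},0)$, so $\hat{\mathcal H}(x_1) = \tfrac12 x_1^TQ_{11}^TE_{11}x_1$ coincides with $\tilde{\mathcal H}(\tilde x)$ and, via Lemma~\ref{TransfoTheo}, with ${\mathcal H}(x)$.

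I expect the main obstacle to be not these substitutions themselves but the verification that the tuple $(E_{11},Q_{11},J_{11},R_{11},\hat B_1,\hat P_1,\hat S,\hat N)$ still satisfies all three pHDAE axioms of Definition~\ref{pHDAEDEF}, in particular that the reduced passivity matrix remains positive semidefinite. This follows from the positive semidefiniteness of the original $W$ via a Schur-complement argument that uses the invertibility of $L_{22}$, and is the structural content promised by the regularization framework of \cite{BeaMXZ18}, to which I would appeal rather than redo the computation here.
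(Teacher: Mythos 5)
Your proposal is correct and follows the same route the paper takes implicitly: read off the first block row of \eqref{Index1Transf} (using $Q_{12}=0$ and the $T$-induced vanishing of $L_{12}$), solve the purely algebraic second block row for $Q_{21}x_1+Q_{22}x_2$ via the invertibility of $L_{22}$, substitute into the output, and split the resulting feedthrough into its symmetric and skew parts; the Hamiltonian and pH-structure claims are handled by appeal to Lemma~\ref{TransfoTheo} and \cite{BeaMXZ18}, exactly as the paper does. The only slip is cosmetic: the consistency condition constrains the initial value of $x_2$ (determined by $x_{1,0}$ and $u(t_0)$), not $x_{1,0}$ itself, which remains free.
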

Typically the original pencil $(s E-LQ)$, $s\in \mathbb{C}$, is regular, i.e., its determinant is not identically zero, which means that a unique  solution exist for every sufficiently smooth input function $u$ and every consistent initial condition. If this is not the case, then a complex regularization procedure can be performed, which consists of transformations, feedbacks and renaming of variables, see \cite{CamKM12}. Since this procedure is not yet available for pHDAEs, in the following we assume that $(s E-LQ)$ is regular. Then the pencil $(s E-Q)$ is regular as well as shown in \cite{MehMW18}. In this case \eqref{Index1Transf} can be decoupled even further by identifying the zero eigenvalues of the system.  For this, a change of basis is applied to the dynamic state $x_1$. From $E^TQ=Q^TE\geq 0$ it follows that the block matrix $Q_{11}E_{11}^{-1}$ is symmetric positive semidefinite and allows for an ordered Schur decomposition that can be obtained from the generalized singular value decomposition \cite[]{GolV96}. Even though we will not carry out this transformation explicitly, it follows that the system can be transformed as
\begin{align*}
{Q}_{11}E_{11}^{-1}=\bar{U}\begin{bmatrix}\Sigma_Q&0\\0&0\end{bmatrix}\bar{U}^T, \qquad \Sigma_Q=\Sigma_Q^T>0, \qquad \bar U^T=\bar U^{-1}.
\end{align*}
Introducing $\bar{U}^TE_{11}x_1=[x_{1,a}^T\,\, x_{1,b}^T]^T$ yields then a pHDAE of the form
\begin{align}\label{DAE_Q_Zeros}
\begin{bmatrix}I&0&0\\0&I&0\\0&0&0\end{bmatrix}\begin{bmatrix}
\dot{x}_{1,a}\\\dot{x}_{1,b}\\\dot{x}_2
\end{bmatrix}&=\begin{bmatrix}L_{11}^{aa} & L_{11}^{ab} & 0\\ L_{11}^{ba} & L_{11}^{bb} & 0\\ {L}_{21}^a\, & {L}_{21}^b\, & L_{22}
\end{bmatrix}\begin{bmatrix}\Sigma_Q &0&0\\0&0&0\\{Q}_{21}^a&{Q}_{21}^b&Q_{22}\end{bmatrix}\begin{bmatrix}
x_{1,a}\\x_{1,b}\\x_2\end{bmatrix}+\left(\begin{bmatrix}{B}_{1}^a\\ {B}_{1}^b\\ B_2
\end{bmatrix}-\begin{bmatrix}{P}_{1}^a\\ {P}_{1}^b\\ P_2\end{bmatrix}\right)u,\\
y&=\left(\begin{bmatrix}{B}_{1}^a\\ {B}_{1}^b\\ B_2
\end{bmatrix}-\begin{bmatrix}{P}_{1}^a\\ {P}_{1}^b\\ P_2\end{bmatrix}\right)^T\begin{bmatrix}\Sigma_Q &0&0\\0&0&0\\{Q}_{21}^a&{Q}_{21}^b&Q_{22}\end{bmatrix}\begin{bmatrix}
x_{1,a}\\x_{1,b}\\x_2\end{bmatrix}+(\hat{S}+\hat{N})u. \nonumber
\end{align}
\begin{remark}\label{rem1}{\rm
In the decoupled form \eqref{DAE_Q_Zeros}, some further transformations can be applied to achieve $Q_{21}^a=0$. However, since the inverse of $Q_{22}$ is involved in this transformation, we stay with the form \eqref{Index1Transf} (\eqref{DAE_Q_Zeros}, respectively).}
\end{remark}

\subsubsection*{Decoupling for pHDAE of index two}
To decouple the algebraic and differential variables for a pHDAE of differentiation-index two
we make use of the index-reduction procedure developed in \cite{BeaMXZ18} for the linear time-varying case. Assume that the state equation with $u=0$ forms a DAE of index two.  It has been shown in \cite{ByeGM97} that the extra constraint equations (hidden constraints) that arise from derivatives are uncontrollable, because otherwise the index reduction could have been done via feedback. This means that these hidden constraint equations are of the form
$\hat A x=0$. We just add these constraint equations to our original
pHDAE \eqref{BEAMXZ18} and obtain an overdetermined DAE system, see also \cite{KunM06}.
Then we perform a singular value decomposition of $E$ by means of orthogonal matrices $\tilde U_1$, $\tilde V_1\in \mathbb{R}^{n\times n}$ as in \eqref{SVDE} and partition the matrix associated to the extra constraints accordingly ${\hat A} \tilde V_1= [A_{1} \,\, \tilde A_{2}]$. Note that we denote blocks by $\tilde{\quad}$ in case that they change in the reduction procedure.
The equations  $\hat A x=0$ include all the constraints that are needed for index reduction. Since  $\tilde E_{11}$ is invertible, these extra equations must arise from the full row-rank part of $\tilde A_{2}$.  In the following we assume w.l.o.g.\ that $\tilde A_{2}$ has full row-rank. This can be always achieved by transforming and omitting hidden constraint equations that do not contribute to the index reduction, see \cite{BeaMXZ18}. Then there exists an orthogonal matrix $\tilde{V}_2$ such that $\tilde A_{2} \tilde{V}_2=[0 \,\, A_3]$ with $A_{3}$ invertible. Introducing
$[x_1^T \,\,x_2^T \,\, x_3^T]^T=V^{-1}x$ with
\begin{align*}
V=\tilde V_1 \mat{cc} I & 0 \\ 0 & \tilde V_2 \rix \mat{ccc} I & 0 & 0 \\ 0 & I & 0 \\ -A_{3}^{-1} A_{1}& 0 & I \rix,
\end{align*}
the hidden constraint equations become $A_{3}x_3=0$, consequently it follows that $x_3=0$. In addition we use an orthogonal matrix $\tilde{U}_2$ such that
\begin{align*}
\tilde{U}_2^T (\tilde{U}_1^T Q V) =\begin{bmatrix} Q_{11} & Q_{12} &  Q_{13} \\Q_{21} & Q_{22} & Q_{23}   \\  0 & 0& Q_{33}  \end{bmatrix},
\end{align*}
transforming \eqref{BEAMXZ18} with $U=\tilde{U}_1\tilde{U}_2$ and $V$ as in Lemma~\ref{TransfoTheo} yields then the following block-structured pHDAE
\begin{align*}
\nonumber
\begin{bmatrix}
E_{11} & 0 &0\\
E_{21} & 0 & 0  \\ E_{31} & 0 & 0\end{bmatrix}
\begin{bmatrix}
\dot x_1 \\ \dot x_2 \\ \dot x_3
\end{bmatrix}&=
\begin{bmatrix}
L_{11}&L_{12} &  L_{13} \\ L_{21}&L_{22}  & L_{23}\\ L_{31}&L_{32} &  L_{33}
\end{bmatrix}
\begin{bmatrix}
Q_{11} & Q_{12} &  Q_{13} \\Q_{21} & Q_{22} & Q_{23}   \\  0 & 0& Q_{33}
\end{bmatrix}\begin{bmatrix}
x_1 \\ x_2 \\ x_3\end{bmatrix}+
\left(\begin{bmatrix}B_1\\B_2\\B_3\end{bmatrix}-\begin{bmatrix}P_1\\P_2 \\ P_3\end{bmatrix}\right)u,\\
y&=
\left(\begin{bmatrix}B_1\\B_2\\B_3\end{bmatrix}+\begin{bmatrix}P_1\\P_2 \\ P_3\end{bmatrix}\right)^T
\begin{bmatrix}
Q_{11} & Q_{12} &  Q_{13} \\ Q_{21} & Q_{22} & Q_{23}   \\  0 &  0& Q_{33}\end{bmatrix}\begin{bmatrix}
x_1 \\ x_2 \\ x_3\end{bmatrix} +(S+N)u
\end{align*}
together with the constraint $x_3=0$, where $L_{ij}=J_{ij}-R_{ij}$. Since the constraint does not change the solution, the subsystem given by the first two block rows is a pHDAE of differentiation-index at most one, i.e.,
\begin{equation}\label{eq:index2}
\begin{aligned}
\begin{bmatrix}E_{11} & 0 \\E_{21} & 0 \end{bmatrix}\begin{bmatrix}
\dot x_1 \\ \dot x_2
\end{bmatrix}&=\begin{bmatrix}
L_{11}&L_{12}  \\ L_{21}&L_{22}
\end{bmatrix}\begin{bmatrix}
Q_{11} & Q_{12} \\Q_{21} & Q_{22}
\end{bmatrix}\begin{bmatrix}
x_1 \\ x_2
\end{bmatrix}+
\left(\begin{bmatrix}B_1\\B_2\end{bmatrix}-\begin{bmatrix}P_1\\P_2 \end{bmatrix}\right)u\\
y&=
\left(\begin{bmatrix}B_1\\B_2\end{bmatrix}+\begin{bmatrix}P_1\\P_2 \end{bmatrix}\right)^T
\begin{bmatrix}
 Q_{11} & Q_{12}  \\ Q_{21} & Q_{22}
\end{bmatrix}\begin{bmatrix}
x_1 \\ x_2
\end{bmatrix}+(S+N)u,
\end{aligned}
\end{equation}
This system \eqref{eq:index2} can then be decoupled as described in the previous paragraph.

\section{Model reduction techniques}\label{sec:3}
\setcounter{equation}{0}
\noindent
In this section we present two different classes of model order reduction methods for pHDAEs. The first class is based on the reduction of the underlying Dirac structure and the associated power conservation, whereas the second one, the well-known moment matching, aims at the approximation of the transfer function.  To derive the reduction methods, we assume that the  pHDAE is in analogous form as \eqref{DAE_Q_Zeros}, but for convenience we neglect the feed-through terms, i.e., we assume that $S=N=0$ and then as consequence $P=0$, since $W\geq 0$. However, generalizations to systems with feed-through term are straightforward.

In the following we use an adapted notation, considering the pHDAE
\begin{align}\label{pHDAEblock}\nonumber
\begin{bmatrix}I_{n_1}&0&0\\0&I_{n_2}&0\\0&0&0\end{bmatrix}\begin{bmatrix}\dot{x}_{1}\\\dot{x}_{2}\\\dot{x}_3
\end{bmatrix}&=\left(\begin{bmatrix}J_{11} & J_{12} & J_{13}\\J_{21} & J_{22} & J_{23}\\ J_{31} & J_{32} & J_{33}
\end{bmatrix}-\begin{bmatrix}R_{11} & R_{12} & R_{13}\\R_{12}^T & R_{22} & R_{23}\\ R_{13}^T & R_{23}^T & R_{33}\end{bmatrix}\right)\begin{bmatrix}Q_{11} &0&0\\0&0&0\\Q_{31}&Q_{32}&Q_{33}\end{bmatrix}\begin{bmatrix}
x_{1}\\x_{2}\\x_3\end{bmatrix}\\
& \quad \nonumber +\begin{bmatrix}B_{1}\\ B_{2}\\ B_3
\end{bmatrix}u,\\
y&=\begin{bmatrix}B_{1}\\ B_{2}\\ B_3
\end{bmatrix}^T\begin{bmatrix}Q_{11} &0&0\\0&0&0\\Q_{31}&Q_{32}&Q_{33}\end{bmatrix}\begin{bmatrix}
x_{1}\\x_{2}\\x_3\end{bmatrix}.
\end{align}
for the states $x_i\in \mathbb{R}^{n_i}$, $\sum_{i=1}^3 n_i=n$, with the accordingly block-structured coefficient matrices $E$, $Q$, $Q^TJQ=-Q^TJ^TQ$, $Q^TRQ=Q^TR^TQ\in \mathbb{R}^{n\times n}$ and $B\in \mathbb{R}^{n\times p}$, where $(J_{j3}-R_{j3})=0$ for $j=1,2$,  $Q_{11}>0$ and $Q_{33}$ is nonsingular.

\subsection{Power conservation based model order reduction}\label{sec:power_method}
\noindent The power conservation based methods were originally developed for standard pHODEs in \cite{PolS12}, i.e., systems of the form \eqref{BEAMXZ18} with $E=I$ being the identity. If for such a system a splitting of the dynamic state as $x=[ x_r^T  \,\, x_s^T]^T$ exists, with $x_r\in\mathbb{R}^r$ and $x_s\in\mathbb{R}^{n-r}$, where $x_s$ does not contribute much  to the input-output behavior of the system, then the general idea is to cut the interconnection between the part of the energy storage port belonging to $x_s$ and the Dirac structure, such that no power is transferred. Then the power is exclusively exchanged via the energy storage of $x_r$, which will act as reduced state variable, whereas $x_s$ will be skipped. The constitutive relations become
\begin{align*}
\dot{x}_i=-f_{x_i}, \qquad \nabla_{x_i} H(x)=e_{x_i}, \quad i\in\{r,s\}
\end{align*}
with $e_x=[e_{x_r}^T\,\, e_{x_s}^T]^T$ and $f_x=[f_{x_r}^T\,\, f_{x_s}^T]^T$. To cut  the interconnection, one forces one of the power products
$(\nabla_{x_s} H)^T\dot{x}_s$ or $e_{x_s}^Tf_{x_s}$ to be zero.
Choosing  $\nabla_{x_s} H(x)=e_{x_s}=0$ yields the \emph{effort constraint reduction method (ECRM)}, while setting $\dot{x}_s=-f_{x_s}=0$ results in the \emph{flow constraint reduction method (FCRM)}.

We now adopt these reduction procedures of \cite{PolS12} to reduce pHDAEs in the form \eqref{DAErep}.
Proceeding from \eqref{pHDAEblock}, let $\hat{V}^{-1}x_1=[x_{1,r}^T\, \, x_{1,s}^T]^T$, $x_{1,r}\in \mathbb{R}^r$,  $x_{1,s}\in \mathbb{R}^{n_1-r}$, be an appropriate splitting of the dynamic part of the state variable with respect to its relevance for the input-output behavior. In the resulting system transformed by means of $V^{-T}$ and $V$ as in Lemma~\ref{TransfoTheo} with a block diagonal matrix $V=\mathrm{diag}(\hat V, I_{n_2} , I_{n_3})\in \mathbb{R}^{n\times n}$, we denote the state by $x=[{x^T_{1,r}} \,\, {x^T_{1,s}} \,\, x_2^T  \,\, x_3^T]^T$. By definition, the flow and effort variables of the energy-storing and energy-dissipating ports inherit the partitioning, and the coefficient matrices are structured accordingly.  The constitutive relations then read $f_x=-(V^{-1}EV)\dot{x}$ and $e_x=(V^TQV)x$, with
\begin{align}\label{eq:fe}
\begin{bmatrix}
f_{x_{1,r}}\\f_{x_{1,s}}\\f_{x_2}\\f_{x_3}
\end{bmatrix} =-
\begin{bmatrix}
I& 0& 0&  0\\
0& I& 0&  0\\
0&  0& I&  0\\
0&  0& 0& 0
\end{bmatrix}
\begin{bmatrix}
\dot{x}_{1,r}\\\dot{x}_{1,s}\\\dot{x}_2\\\dot{x}_3
\end{bmatrix},
\qquad
\begin{bmatrix}
e_{x_{1,r}}\\e_{x_{1,s}}\\e_{x_2}\\e_{x_3}
\end{bmatrix} =
\begin{bmatrix}
Q_{11}^{rr} & Q_{11}^{rs} & 0 & 0\\
(Q_{11}^{rs})^T& Q_{11}^{ss} & 0 & 0\\
0 & 0 & 0 & 0\\
Q_{31}^r & Q_{31}^s & Q_{32} & Q_{33}
\end{bmatrix}
\begin{bmatrix}
x_{1,r} \\ x_{1,s} \\ x_2 \\ x_3
\end{bmatrix}.
\end{align}
For the model reduction we have to open the resistive port. The transformed symmetric positive semi-definite dissipation matrix $V^{-1}RV^{-T}\in \mathbb{R}^{n\times n}$ admits  an ordered Schur decomposition
\begin{align}\label{LowRankR}
V^{-1}RV^{-T}=\begin{bmatrix}C&\hat{C}\end{bmatrix}\begin{bmatrix}\hat{R}&0\\0&0\end{bmatrix}\begin{bmatrix}
C^T\\\hat{C}^T\end{bmatrix}=C\hat{R}C^T,
\end{align}
with $0<\hat{R}=\hat {R}^T\in\mathbb{R}^{\ell\times \ell}$ and $C\in\mathbb{R}^{n\times \ell}$, with $\ell$ being the number of energy-dissipating elements. Plugging \eqref{LowRankR} into the transformed system and introducing the associated flow and effort variables accordingly, i.e., $f_R=-\hat{R}e_R$ and $e_R=C^T(V^TQV)x=C^Te_x$, yields a pHDAE with opened resistive port. Inserting the constitutive relations \eqref{eq:fe} and introducing the external port variables $(f_P,e_P)=(y,u)$, where $y=(V^{-1}B)^T (V^TQV)x=(V^{-1}B)^Te_x$, we obtain a new representation of \eqref{pHDAEblock} as
\begin{equation}\label{WholeDAE}
\begin{aligned}
-\begin{bmatrix}
I_{r}&0& 0&  0\\
0& I_{n_1-r}& 0& 0\\
 0&  0& {I_{n_2}}&  0\\
 0&  0& 0& I_{n_3}\\
0&0& 0& 0\\
0&0& 0& 0\end{bmatrix}
\begin{bmatrix}
f_{x_{1,r}}\\f_{x_{1,s}}\\f_{x_2}\\f_{x_3}
\end{bmatrix}&=
\begin{bmatrix}
J_{11}^{rr}&J_{11}^{rs}& {J_{12}^r}&{J_{13}^r}\\
J_{11}^{sr}& J_{11}^{ss}& {J_{12}^s}& {J_{13}^s}\\
{J_{21}^r}& {J_{21}^s}&{J_{22}}&{J_{23}}\\
{J_{31}^r}& {J_{31}^s}&{J_{32}}&{J_{33}}\\
-(B_1^{r})^T&-(B_1^{s})^T&{-B_2^T}&{-B_3^T}\\
-(C_1^{r})^T&-(C_1^{s})^T&{-C_2^T}&{-C_3^T}\end{bmatrix}
\begin{bmatrix}e_{x_{1,r}}\\e_{x_{1,s}}\\e_{x_2}\\e_{x_3}
\end{bmatrix}\\
&\quad+\begin{bmatrix}C_1^{r}\\ C_1^{s}\\{C_2}\\{
C_3}\\0\\0\end{bmatrix}f_R
+\begin{bmatrix}
0\\0\\{0}\\{0}\\0\\I_{\ell}\end{bmatrix}e_R+\begin{bmatrix}
0\\0\\{0}\\{0}\\I_m\\0\end{bmatrix}y+
\begin{bmatrix}B_1^{r}\\
B_1^{s}\\{B_2}\\{B_3}\\0\\0\end{bmatrix}u.
\end{aligned}
\end{equation}

In the \emph{Effort Constraint Reduction Method (ECRM)} the energy transfer between the energy-storing elements and the Dirac structure is cut by setting $e_{x_{1,s}}=0$. Here, the Hamiltonian is considered only weakly influenced by $x_{1,s}$. The relation $x_{1,s}=-(Q_{11}^{ss})^{-1}(Q_{11}^{rs})^Tx_{1,r}$ follows in a straightforward way from \eqref{eq:fe} as $Q_{11}^{ss}=(Q_{11}^{ss})^T>0$. The reduced Dirac structure is obtained by multiplying \eqref{WholeDAE} from the left with any matrix $D^{\mathrm{ec}}$ of maximal rank satisfying $D^{\mathrm{ec}}F_{x_{1,s}}=0$, such as,
\begin{align*}
D^{\mathrm{ec}}=
\begin{bmatrix}I_{r}&0&0&0&0&0\\
0&0&I_{n_2}&0&0&0\\
0&0&0&I_{n_3}&0&0\\
0&0&0&0&I_m&0\\
0&0&0&0&0&I_{\ell}
\end{bmatrix}.
\end{align*}
Rewriting the resulting system again as DAE and closing the resistive port yields the reduced model \eqref{ECRM}.
\begin{theorem}[Reduced model by ECRM]\label{thm:ercm}
Consider a pHDAE of the form \eqref{pHDAEblock} with its representation \eqref{WholeDAE}. Then the reduced model obtained by ECRM with reduced state $x^{\mathrm{ec}}=[x^T_{1,r}\,\,x^T_2\,\,x^T_3]^T\in \mathbb{R}^{(r+{n_2}+{n_3})}$, $r\ll n_1$ is a pHDAE given by
\begin{align}\label{ECRM}\nonumber
\underbrace{\begin{bmatrix}I_r&0&0\\0&I_{n_2}& 0\\0&0&0\end{bmatrix}}_{E^{\mathrm{ec}}}
\begin{bmatrix}\dot{x}_{1,r}\\\dot{x}_2\\\dot{x}_3\end{bmatrix}&=
\left(\underbrace{\begin{bmatrix}
J_{11}^{rr}&J_{12}^{r}&J_{13}^{r}\\
J_{21}^{r}&J_{22}&J_{23}\\
J_{31}^{r}&J_{32}&J_{33}
\end{bmatrix}}_{J^{\mathrm{ec}}}\right.\\& \quad\nonumber \left.-\underbrace{\begin{bmatrix}
R_{11}^{rr}&R_{12}^r&R_{13}^r\\ R_{21}^{r}&R_{22}&R_{23}\\
R_{31}^r&R_{32}&R_{33}
\end{bmatrix}}_{R^{\mathrm{ec}}}\right)\underbrace{\begin{bmatrix}
{\hat Q}_{11}&0&0\\0&0&0\\ {\hat Q}_{31}&Q_{32}&Q_{33}
\end{bmatrix}}_{Q^{\mathrm{ec}}}
\begin{bmatrix}x_{1,r}\\x_2\\x_3
\end{bmatrix}+
\underbrace{\begin{bmatrix}B_1^{r}\\B_2\\B_3
\end{bmatrix}}_{B^{\mathrm{ec}}}u,\\
y^{\mathrm{ec}}&=(B^{\mathrm{ec}})^T Q^{\mathrm{ec}} x^{\mathrm{ec}},
\end{align}
with $J^{\mathrm{ec}}=-(J^{\mathrm{ec}})^T$, $R^{\mathrm{ec}}=(R^{\mathrm{ec}})^T\geq 0$, where ${\hat Q}_{11}=Q_{11}^{rr}-Q_{11}^{rs}(Q_{11}^{ss})^{-1}(Q_{11}^{rs})^T$ and ${\hat Q}_{31}=Q_{31}^r-Q_{31}^s (Q_{11}^{ss})^{-1}(Q_{11}^{rs})^T$.
\end{theorem}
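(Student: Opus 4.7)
The plan is to trace the effort constraint $e_{x_{1,s}}=0$ through the Dirac-structure representation \eqref{WholeDAE} and read off the resulting reduced DAE, then check that the three defining properties of a pHDAE from Definition~\ref{pHDAEDEF} are preserved.

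First, I would use the constitutive relation \eqref{eq:fe} to resolve $e_{x_{1,s}}=0$. Its second block gives $(Q_{11}^{rs})^T x_{1,r}+Q_{11}^{ss}x_{1,s}=0$, and since $Q_{11}>0$ its principal block $Q_{11}^{ss}$ is symmetric positive definite, hence invertible, yielding
\[
x_{1,s}=-(Q_{11}^{ss})^{-1}(Q_{11}^{rs})^T x_{1,r}.
\]
Substituting this relation into the first and fourth blocks of \eqref{eq:fe} produces exactly the Schur-complement expressions $\hat Q_{11}$ and $\hat Q_{31}$ appearing in $Q^{\mathrm{ec}}$, so the reduced constitutive law is recovered automatically.

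Next, I would premultiply \eqref{WholeDAE} by $D^{\mathrm{ec}}$. By construction $D^{\mathrm{ec}}$ annihilates the block row carrying $f_{x_{1,s}}$, while the column of coefficients multiplying $e_{x_{1,s}}$ drops out since $e_{x_{1,s}}=0$. The surviving rows and columns are precisely those indexed by $(1,r)$, $2$, $3$, together with the resistive and external port rows. Rewriting the resulting Dirac-structure relation as a DAE and closing the resistive port via $f_R=-\hat R e_R=-\hat R C^T e_x$, so that $C\hat R C^T$ reassembles the dissipation, gives the state-space form \eqref{ECRM}. The matrices $J^{\mathrm{ec}}$ and $R^{\mathrm{ec}}$ are then exactly the $3\times 3$ principal submatrices of the $4\times 4$ block structures induced on $J$ and $R$ by the split $x_1=(x_{1,r},x_{1,s})$, and $B^{\mathrm{ec}}$ collects the rows indexed by $(1,r)$, $2$, $3$.

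Finally, the pH-properties must be verified. Skew-symmetry $J^{\mathrm{ec}}=-(J^{\mathrm{ec}})^T$ and $R^{\mathrm{ec}}=(R^{\mathrm{ec}})^T\ge 0$ are inherited because principal submatrices preserve skew-symmetry and positive semi-definiteness respectively. A direct block multiplication gives $(E^{\mathrm{ec}})^T Q^{\mathrm{ec}}=\mathrm{diag}(\hat Q_{11},0,0)$, whose symmetric positive (semi-)definiteness follows from the classical Schur-complement lemma applied to the symmetric positive definite $Q_{11}$, while the passivity condition reduces to $(Q^{\mathrm{ec}})^T R^{\mathrm{ec}} Q^{\mathrm{ec}}\ge 0$, which is immediate from $R^{\mathrm{ec}}\ge 0$. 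As a byproduct, the Hamiltonian is preserved on the constraint set: inserting $x_{1,s}=-(Q_{11}^{ss})^{-1}(Q_{11}^{rs})^T x_{1,r}$ into $\tfrac12 x^T Q^T E x$ collapses it to $\tfrac12 x_{1,r}^T \hat Q_{11} x_{1,r}$. The chief obstacle I expect is the careful bookkeeping to confirm that the row-projection by $D^{\mathrm{ec}}$ and the effort substitution together yield a valid Dirac structure on the reduced space of matching dimension; once the Schur-complement identifications are in place, the structural verifications reduce to the principal-submatrix argument and the Schur-complement lemma.
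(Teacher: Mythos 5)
Your proposal is correct and follows essentially the same route as the paper: the effort substitution $x_{1,s}=-(Q_{11}^{ss})^{-1}(Q_{11}^{rs})^T x_{1,r}$ plus the row projection $D^{\mathrm{ec}}$ yields the reduced system, and the pH properties are checked exactly as in the paper via the principal-submatrix argument for $J^{\mathrm{ec}}$, $R^{\mathrm{ec}}$ and the Schur-complement lemma for $\hat Q_{11}$. The only cosmetic difference is in the passivity check, where the paper writes $(Q^{\mathrm{ec}})^TR^{\mathrm{ec}}Q^{\mathrm{ec}}$ as a congruence $D_bV^TQ^TRQVD_b^T$ of the original dissipation product, whereas you argue directly from $R^{\mathrm{ec}}\geq 0$; both are valid here since the transformed $R$ is assumed symmetric positive semidefinite in this section.
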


\begin{proof}
In the described reduction the port-Hamiltonian properties are inherited. The skew-symmetry of $J^{\mathrm{ec}}$ follows trivially, since
\begin{align*}
J^{\mathrm{ec}}=D_aV^{-1}JV^{-T}D^T_a=-(J^\mathrm{ec})^T, \qquad D_a=\begin{bmatrix}I_{r}&0&0&0\\0&0&I_{n_2}&0\\0&0&0&I_{n_3}\end{bmatrix}.
\end{align*}
The symmetry and positive semi-definiteness 
\begin{align*}
(E^{\mathrm{ec}})^TQ^{\mathrm{ec}}=\begin{bmatrix}{\hat Q}_{11}&0\\0&0\end{bmatrix}=(Q^{\mathrm{ec}})^TE^{\mathrm{ec}} \geq0,
\end{align*}
follows from ${\hat Q}_{11}={\hat Q}_{11}^T>0$ as it is constructed from the Schur complement of a positive definite matrix.
Finally,
\begin{align*}
(Q^{\mathrm{ec}})^TR^{\mathrm{ec}}Q^{\mathrm{ec}}=D_bV^TQ^TRQVD^T_b\geq 0, \qquad
D_b=\begin{bmatrix}I_{r}&-Q_{11}^{rs}(Q_{11}^{ss})^{-1}&0&0\\
0&0&I_{n_2}&0\\0&0&0&I_{n_3}\end{bmatrix},
\end{align*}
where the component $x_{1,s}$ is projected out by means of $D_b$.
\end{proof}
In the \emph{Flow Constraint Reduction Method (FCRM)}, the energy transfer between the energy-storing elements and the Dirac structure is cut by setting $f_{x_{1,s}}=0$ and $\dot{x}_{1,s}=0$. Thus, $x_{1,s}$ is constant and can particularly be chosen as $x_{1,s}=0$. The reduced Dirac structure is obtained by multiplying \eqref{WholeDAE} from the left with any matrix $D^{\mathrm{fc}}$ of maximal rank satisfying $D^{\mathrm{fc}}E_{x_{1,s}}=0$, e.g., with
\begin{align*}
D^{\mathrm{fc}}=
\begin{bmatrix}
I_{r}& -J_{11}^{rs}(J_{11}^{ss})^{-1}&0&0&0&0\\
							0&-J_{21}^{s}\,(J_{11}^{ss})^{-1}&I_{n_2}&0&0&0\\
							0&-J_{31}^{s}\,(J_{11}^{ss})^{-1}&0&I_{n_3}&0&0\\
							0&\,\,\,(B_1^{s})^T\,\,(J_{11}^{ss})^{-1}&0&0&I_m&0\\						0&\,\,\,(C_1^{s})^T\,\,(J_{11}^{ss})^{-1}&0&0&0&I_{\ell}
\end{bmatrix},
\end{align*}
in the case that $(J_{11}^{ss})^{-1}$ exists.
Rewriting the system as a DAE, analogously to ECRM, yields the reduced model \eqref{FCRM}.
\begin{theorem}[Reduced model by FCRM]\label{thm:frcm}
Consider a pHDAE of the form \eqref{pHDAEblock} with its general representation \eqref{WholeDAE} and suppose that $J_{11}^{ss}$ in \eqref{WholeDAE} is invertible. Then the reduced order model obtained by FCRM is port-Hamiltonian and, for state $x^{\mathrm{fc}}=[x^T_{1,r}\,\,x^T_2\,\,x^T_3]^T\in \mathbb{R}^{(r+{n_2}+{n_3})}$, $r\ll n_1$, given  by
\begin{equation}\label{FCRM}
\begin{aligned}
\underbrace{\begin{bmatrix}I_r&0&0\\0&I_{n_2}&0\\
0&0&0\end{bmatrix}}_{E^\mathrm{fc}}
\begin{bmatrix}\dot{x}_{1,r}\\\dot{x}_2\\\dot{x}_3\end{bmatrix}
&=\left(\underbrace{\mathcal{J}-{\mathcal C}^TZ_J{\mathcal C}}_{J^\mathrm{fc}}-\underbrace{{\mathcal C}^TZ_R{\mathcal C}}_{R^\mathrm{fc}}\right)
\underbrace{\begin{bmatrix}Q_{11}^{rr}&0&0\\0&0&0\\
Q_{31}^r&Q_{32}&Q_{33}\end{bmatrix}}_{Q^\mathrm{fc}}
\begin{bmatrix}
x_{1,r}\\x_2\\x_3\end{bmatrix}\\
&\quad +\left(\underbrace{-{\mathcal B}^T-{\mathcal C}^TZ_J{\mathcal G}}_{B^\mathrm{fc}}-
\underbrace{{\mathcal C}^TZ_R{\mathcal G}}_{P^\mathrm{fc}}\right)u,\\
y^\mathrm{fc}&=\left(B^\mathrm{fc}+P^\mathrm{fc}\right)^T
Q^\mathrm{fc}x^\mathrm{fc}+\left(\underbrace{{\mathcal G}^T Z_R{\mathcal G}}_{S^\mathrm{fc}}+\underbrace{\mathcal G^T Z_J{\mathcal G}-{\mathcal N}}_{N^\mathrm{fc}}\right)u,
\end{aligned}
\end{equation}
with
\begin{align*}
\mathcal{J}&=[\mathcal{J}]_{k,l=1,2,3}=-\mathcal{J}^T, \\
\mathcal{J}_{11}&=J_{11}^{rr}-J_{11}^{rs}(J_{11}^{ss})^{-1}J_{11}^{sr},\,
&\mathcal{J}_{1j}&=J_{1j}^r-J_{11}^{rs} (J_{11}^{ss})^{-1}J_{1j}^{s}, &&  j=2,3,\\
\mathcal{J}_{2j}&=J_{2j}-J_{21}^{s} (J_{11}^{ss})^{-1}J_{1j}^{s}, \,
&\mathcal{J}_{33}&= J_{33}-J_{31}^s(J_{11}^{ss})^{-1}J_{13}^s, && j=2,3,\\
{\mathcal B}&=[{\mathcal B}]_{k=1,2,3},\\
{\mathcal B}_1&= (B_1^{s})^T(J_{11}^{ss})^{-1}J_{11}^{sr}-(B_1^{r})^T,\,
&{\mathcal B}_j&= (B_1^{s})^T(J_{11}^{ss})^{-1}J_{1j}^{s}-B_j^T,&& j=2,3,\\
{\mathcal C}&=[{\mathcal C}]_{k=1,2,3}, \\
{\mathcal C}_1&=(C_1^{s})^T(J_{11}^{ss})^{-1}J_{11}^{sr}-(C_1^{r})^T,\,
&{\mathcal C}_j&= (C_1^{s})^T(J_{11}^{ss})^{-1}J_{1j}^{s}-C_{j}^T,&& j=2,3,\\
\mathcal{G}&= (C_1^{s})^T(J_{11}^{ss})^{-1}B_1^{s},\,
&{\mathcal D}&= (C_1^{s})^T(J_{11}^{ss})^{-1}C_1^{s},
&& {\mathcal N}=(B_1^{s})^T(J_{11}^{ss})^{-1}B_1^{s},
\end{align*}
as well as $Z=\hat{R}(I-\mathcal{D}\hat{R})^{-1}$ with its symmetric and skew-symmetric parts,
$Z_R=(Z+Z^T)/2$ and $Z_J=(Z-Z^T)/2$.
\end{theorem}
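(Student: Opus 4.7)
The plan is to execute the reduction prescribed in the text and then verify the three defining pH properties of the resulting system.

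First I would apply the left-multiplication by $D^{\mathrm{fc}}$ to the open-port representation \eqref{WholeDAE}. By construction of its second block column, $D^{\mathrm{fc}}$ simultaneously annihilates the $f_{x_{1,s}}$-column on the left-hand side and the $e_{x_{1,s}}$-column on the right-hand side of \eqref{WholeDAE}; equivalently, it solves the second block row of \eqref{WholeDAE} under the flow constraint $f_{x_{1,s}}=-\dot x_{1,s}=0$ for $e_{x_{1,s}}$, namely
\begin{equation*}
e_{x_{1,s}} = -(J_{11}^{ss})^{-1}\bigl[J_{11}^{sr} e_{x_{1,r}} + J_{12}^s e_{x_2} + J_{13}^s e_{x_3} + C_1^s f_R + B_1^s u\bigr],
\end{equation*}
and substitutes this into the remaining five block rows. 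Choosing the integration constant $x_{1,s}=0$ then removes $x_{1,s}$ from the constitutive relations \eqref{eq:fe}, leaving the reduced state $x^{\mathrm{fc}}=[x_{1,r}^T,x_2^T,x_3^T]^T$. Collecting coefficients produces the Schur-complement matrices $\mathcal J,\mathcal B,\mathcal C,\mathcal D,\mathcal G,\mathcal N$ defined in the statement; the resistive row now contains the coupling term $\mathcal D f_R$.

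I would then close the resistive port by combining $f_R=-\hat R e_R$ with the resistive-row identity $e_R=-\mathcal C\,e_{\tilde x}-\mathcal G u-\mathcal D f_R$ obtained above; solving for $f_R$ yields
\begin{equation*}
f_R = Z\bigl(\mathcal C\,e_{\tilde x}+\mathcal G u\bigr), \qquad Z = \hat R(I-\mathcal D\hat R)^{-1}.
\end{equation*}
Splitting $Z=Z_R+Z_J$ into its symmetric and skew-symmetric parts and inserting the resulting $f_R$ back into the state and output equations directly produces the coefficients $J^{\mathrm{fc}},R^{\mathrm{fc}},B^{\mathrm{fc}},P^{\mathrm{fc}},S^{\mathrm{fc}},N^{\mathrm{fc}}$ listed in \eqref{FCRM}. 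This part is bookkeeping and requires no new idea.

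Finally I would verify the pH conditions of Definition~\ref{pHDAEDEF}. Skew-symmetry of $J^{\mathrm{fc}}=\mathcal J-\mathcal C^T Z_J\mathcal C$ follows because $\mathcal J$ is a Schur complement of the skew-symmetric $J$ and $Z_J$ is skew by construction, while symmetry of $R^{\mathrm{fc}}=\mathcal C^T Z_R\mathcal C$ is immediate and $(E^{\mathrm{fc}})^T Q^{\mathrm{fc}}=(Q^{\mathrm{fc}})^T E^{\mathrm{fc}}\geq 0$ is inherited directly from the upper-left block of the original $E^T Q$. The main obstacle is the semidefiniteness of the passivity matrix
\begin{equation*}
W^{\mathrm{fc}}=\begin{bmatrix} (Q^{\mathrm{fc}})^T R^{\mathrm{fc}} Q^{\mathrm{fc}} & (Q^{\mathrm{fc}})^T P^{\mathrm{fc}} \\ (P^{\mathrm{fc}})^T Q^{\mathrm{fc}} & S^{\mathrm{fc}} \end{bmatrix}\geq 0.
\end{equation*}
For this I would exploit that $J_{11}^{ss}$ is a principal submatrix of the skew $J$, so $\mathcal D=(C_1^s)^T(J_{11}^{ss})^{-1}C_1^s$ is itself skew. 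Setting $K=\hat R^{1/2}\mathcal D\hat R^{1/2}$ (so $K^T=-K$) rewrites $Z$ as $\hat R^{1/2}(I-K)^{-1}\hat R^{1/2}$, and a short calculation yields $Z_R=\hat R^{1/2}(I-K^2)^{-1}\hat R^{1/2}\geq 0$ since $-K^2=K^T K\geq 0$. Because $W^{\mathrm{fc}}$ factors as $M^T Z_R M$ with $M=[\mathcal C Q^{\mathrm{fc}}\;\;\mathcal G]$, the bound $Z_R\geq 0$ transfers to $W^{\mathrm{fc}}\geq 0$, which completes the proof.
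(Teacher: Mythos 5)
Your proposal is correct and follows the same overall structure as the paper's proof: derive the reduced coefficients by eliminating $e_{x_{1,s}}$ and closing the resistive port, then check skew-symmetry of $J^{\mathrm{fc}}$, $N^{\mathrm{fc}}$ and symmetry of $R^{\mathrm{fc}}$, $S^{\mathrm{fc}}$ by construction, semidefiniteness of $(E^{\mathrm{fc}})^TQ^{\mathrm{fc}}$ from $Q_{11}^{rr}>0$, and finally the factorization of the passivity matrix as $M^TZ_RM$ with $M=[\mathcal{C}Q^{\mathrm{fc}}\;\;\mathcal{G}]$ --- exactly the factorization displayed in the paper. The one place where you genuinely go beyond the paper is the positive semidefiniteness of $Z_R$: the paper simply cites \cite{PolS12} for this fact, whereas you prove it from scratch using the skewness of $\mathcal{D}=(C_1^s)^T(J_{11}^{ss})^{-1}C_1^s$ (inherited from the skewness of the diagonal block $J_{11}^{ss}$) together with the identity $Z_R=\hat{R}^{1/2}(I-K^2)^{-1}\hat{R}^{1/2}=\hat{R}^{1/2}(I+K^TK)^{-1}\hat{R}^{1/2}>0$ for $K=\hat{R}^{1/2}\mathcal{D}\hat{R}^{1/2}$; I have checked this computation and it is correct, and it makes the argument self-contained. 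One small wording fix: $D^{\mathrm{fc}}$ does \emph{not} annihilate the $f_{x_{1,s}}$-column on the left-hand side of \eqref{WholeDAE}, since $D^{\mathrm{fc}}F_{x_{1,s}}\neq 0$ in general; that term drops only because the flow constraint sets $f_{x_{1,s}}=0$. Your subsequent ``equivalently'' formulation (solve the second block row for $e_{x_{1,s}}$ under $f_{x_{1,s}}=0$ and substitute into the remaining rows) is the correct statement and is what your bookkeeping actually uses.
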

\begin{proof}
In the described reduction the port-Hamiltonian properties are inherited.
It follows trivially that $J^\mathrm{fc}$, $N^\mathrm{fc}$ are skew-symmetric and $R^\mathrm{fc}$, $S^\mathrm{fc}$ are symmetric by construction. From $Q_{11}^{rr}=(Q_{11}^{rr})^T>0$ it results that $(E^\mathrm{fc})^TQ^\mathrm{fc}=(Q^\mathrm{fc})^TE^\mathrm{fc}\geq 0$. Finally,
\begin{align*}
\begin{bmatrix}(Q^\mathrm{fc})^TR^\mathrm{fc}Q^\mathrm{fc} & (Q^\mathrm{fc})^TP^\mathrm{fc}\\(P^\mathrm{fc})^TQ^\mathrm{fc}& S^\mathrm{fc}\end{bmatrix}=\begin{bmatrix}(Q^\mathrm{fc})^T&0\\0&I\end{bmatrix}
\begin{bmatrix}{\mathcal C}^T\\ {\mathcal G}^T\end{bmatrix}Z_R\begin{bmatrix}{\mathcal C} &{\mathcal G}\end{bmatrix}
\begin{bmatrix}Q^\mathrm{fc}&0\\0&I\end{bmatrix}\geq 0
\end{align*}
holds, since $Z_R$ is positive (semi-)definite, see \cite[]{PolS12}.
\end{proof}
The reduced models obtained by ECRM and FCRM have similarities but also show crucial differences. Obviously, $E^\mathrm{ec}=E^\mathrm{fc}$, implying the same size of the reduced states. The energy matrices $Q^\mathrm{ec}$, $Q^\mathrm{fc}$ only differ in the first column. In particular, ECRM generates an additive term in the matrix $\hat Q_{11}$, and also in the matrix $\hat Q_{31}$ associated to the algebraic constraints due to the elimination of $x_{1,s}$, cf.\ \eqref{ECRM}. Clearly, the models differ in the feed-through term which is only present in FCRM (see $S^\mathrm{fc}$, $N^\mathrm{fc}$ and $P^\mathrm{fc}$ in \eqref{FCRM}), even though the original system \eqref{pHDAEblock} did not have such a term. It is further important to note that the construction of ECRM is always applicable, whereas in the presented form FCRM requires the skew-symmetric matrix $J^{ss}_{11}$ to be invertible, which is, e.g., impossible if the size $(n_1-r)$ is odd. In the case that $J^{ss}_{11}$ is singular, the procedure has to be modified, but we do not present this modification here because it gets rather technical.
\begin{remark}\label{rem:diff}
In the presented power conservation based methods the general representation \eqref{WholeDAE} for the pHDAE differs from the one for a standard pHODE \cite{PolS12} by the two additional block rows and columns for the equations associated with the kernel of the energy matrix and with the algebraic constraints. Hence, ECRM yields a reduced model \eqref{ECRM} with equivalent block matrices in the dynamic part. However, although the reduction is only applied to the dynamic state, additional terms in $\hat Q_{31}$ associated to the algebraic constraints are generated. Also the reduced model \eqref{FCRM} by FCRM contains additional blocks, i.e., $\mathcal{J}_{1j}$, $\mathcal{J}_{2j}$, $\mathcal{J}_{33}$, ${\mathcal B}_j$ and ${\mathcal C}_j$, $j\in\{2,3\}$, which arise from the additional equations. The coefficients $\mathcal{J}_{11}$, ${\mathcal B}_1$, ${\mathcal C}_1$, $\mathcal G$, $\mathcal D$ and $\mathcal N$ are analogous to their counterparts for a pHODE.
\end{remark}

\subsection{Moment Matching}
\noindent The model reduction procedure of moment matching (MM) derives a reduced order model by means of a Galerkin projection in such a way that the leading coefficients of the series expansion of its transfer function (its moments) match those of the full order system, i.e.,\ $G(s)=B^T(s E-(J-R)Q)^{-1}B=\sum_{j=0}^\infty m_j(s_0-s)^j$ with moments $m_j$ associated with a given shift parameter $s_0$. For details of MM for DAEs we refer to \cite{Fre05} for $s_0\in\mathbb{C}$ and to \cite{BenS06} for $s_0=\infty$.  To apply these techniques in a structure-preserving way to the pHDAE \eqref{pHDAEblock}, the symmetric positive definite energy matrix block $Q_{11}$ associated to the dynamic state is first transformed to become an identity, as done in the works on MM for pHODEs \cite{PolS10, PolS11}. Performing a Cholesky factorization $Q_{11}=K K^T$ and transforming appropriately, by Lemma~\ref{TransfoTheo} with $U=\mathrm{diag}(K, I_{n_2},I_{n_3})$, $V=U^{-T}\in \mathbb{R}^{n\times n}$, the resulting system is still port-Hamiltonian. Then a Galerkin projection matrix for the dynamic part $V_r\in \mathbb{R}^{n_1\times r}$, $r\ll n_1$, can be computed, e.g., by an Arnoldi method \cite{Fre05}, such that $V_r^TV_r=I_r$ and its columns span a Krylov space of associated to the system shifted by $s_0$ \cite{Saa03}. Applying finally the Galerkin projection with $V^\mathrm{m}_r=\mathrm{diag}(V_r, I_{n_2},I_{n_3})\in \mathbb{R}^{n\times(r+n_2+n_3)}$ yields the reduced model.
\begin{theorem}[Reduced model by MM]
Consider a pHDAE of the form \eqref{pHDAEblock} with energy matrix block $Q_{11}=K K^T$. Let the projection matrix $V_r\in \mathbb{R}^{n_1\times r}$ be computed by the Arnoldi method. Then the reduced system is port-Hamiltonian, matches the first $r$ moments of the full order system and is for the state $x^\mathrm{m}=[x_{1,r}^T\,\,x_2^T\,\,x_3^T]^T\in \mathbb{R}^{(r+n_2+n_3)}$, $r\ll n_1$,
with $x_{1,r} = V_r^{T} K^{T} x_1$
given by
\begin{align*}\label{MM}
&\underbrace{\begin{bmatrix}I_r&0&0\\0&I_{n_2}& 0\\0&0&0\end{bmatrix}}_{E^{\mathrm{m}}}
\begin{bmatrix}\dot{x}_{1,r}\\\dot{x}_2\\\dot{x}_3\end{bmatrix}\\ \nonumber &  \quad \quad =
\underbrace{\begin{bmatrix}
V^T_rK^{T}L_{11}K V_r &V^T_rK^{T} L_{{12}}& V^T_rK^{T} L_{{13}}\\
L_{21}K V_r&L_{22}&L_{23}\\
L_{31}K V_r&L_{32}&L_{33}
\end{bmatrix}}_{J^{\mathrm{m}}-R^\mathrm{m}}\underbrace{\begin{bmatrix}
I_r&0&0\\0&0&0\\Q_{31}K^{-T}V_r&Q_{32}&Q_{33}
\end{bmatrix}}_{Q^{\mathrm{m}}}
\begin{bmatrix}x_{1,r}\\x_2\\x_3
\end{bmatrix}\\ & \quad \quad \quad +
\underbrace{\begin{bmatrix}V^T_rK^{T} B_{1}\\B_2\\B_3
\end{bmatrix}}_{B^{\mathrm{m}}}u,\\
& \quad y^{\mathrm{m}}=(B^{\mathrm{m}})^T Q^{\mathrm{m}} x^{\mathrm{m}}
\end{align*}
with $L_{ij}=J_{ij}-R_{ij}$, $i,j=1,2,3$.
\end{theorem}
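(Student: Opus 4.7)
The plan is to combine the Cholesky-based congruence of Lemma~\ref{TransfoTheo} with a block-diagonal one-sided Galerkin projection, then verify the port-Hamiltonian structure and the moment matching property. First I would apply Lemma~\ref{TransfoTheo} with $U=\mathrm{diag}(K,I_{n_2},I_{n_3})$ and $V=U^{-T}$. This leaves $E$ and the block pattern of \eqref{pHDAEblock} intact while transforming $Q_{11}\mapsto K^{-1}Q_{11}K^{-T}=I_{n_1}$, $Q_{31}\mapsto Q_{31}K^{-T}$, and correspondingly $\tilde J_{11}=K^T J_{11}K$, $\tilde J_{1j}=K^T J_{1j}$, $\tilde J_{i1}=J_{i1}K$ for $i,j\in\{2,3\}$, with analogous formulas for $\tilde R$ and $\tilde B_1=K^T B_1$. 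By Lemma~\ref{TransfoTheo} the transformed system is still a pHDAE with the same Hamiltonian.

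The second step is the block Galerkin projection $V^\mathrm{m}_r=\mathrm{diag}(V_r,I_{n_2},I_{n_3})$ applied separately to the individual coefficient matrices $\tilde E,\tilde J,\tilde R,\tilde Q$, and $\tilde B$. Since $V_r^T V_r=I_r$, a blockwise substitution immediately reproduces $E^\mathrm{m},J^\mathrm{m}-R^\mathrm{m},Q^\mathrm{m},B^\mathrm{m}$ exactly as stated. The only subtle bookkeeping point is the identity
\begin{equation*}
V^\mathrm{m}_r(V^\mathrm{m}_r)^T\tilde Q V^\mathrm{m}_r=\tilde Q V^\mathrm{m}_r,
\end{equation*}
which ensures that the factored reduced form $(J^\mathrm{m}-R^\mathrm{m})Q^\mathrm{m}$ coincides with $(V^\mathrm{m}_r)^T(\tilde J-\tilde R)\tilde Q V^\mathrm{m}_r$. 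It holds because the first block row of $\tilde Q V^\mathrm{m}_r$ is $[V_r,0,0]$, its second block row vanishes, and its third block row $[Q_{31}K^{-T}V_r,Q_{32},Q_{33}]$ is untouched by the third-block identity factor in $V^\mathrm{m}_r(V^\mathrm{m}_r)^T$.

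For the pHDAE axioms I would apply this identity once on the right and once after transposition on the left to deduce
\begin{equation*}
(Q^\mathrm{m})^T J^\mathrm{m} Q^\mathrm{m}=(V^\mathrm{m}_r)^T\tilde Q^T\tilde J\tilde Q V^\mathrm{m}_r,\qquad (Q^\mathrm{m})^T R^\mathrm{m} Q^\mathrm{m}=(V^\mathrm{m}_r)^T\tilde Q^T\tilde R\tilde Q V^\mathrm{m}_r,
\end{equation*}
and $(E^\mathrm{m})^T Q^\mathrm{m}=(V^\mathrm{m}_r)^T\tilde Q^T\tilde E V^\mathrm{m}_r$. Since $\tilde Q^T\tilde J\tilde Q=U^{-1}Q^T J Q U^{-T}$ is skew-symmetric, and $\tilde Q^T\tilde R\tilde Q=U^{-1}Q^T R Q U^{-T}$ together with $\tilde Q^T\tilde E=U^{-1}Q^T E U^{-T}$ are symmetric positive semi-definite, the same properties pass to the one-sided congruences by $V^\mathrm{m}_r$. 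Hence the reduced system satisfies the three defining conditions of Definition~\ref{pHDAEDEF}.

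The moment matching assertion is the remaining and, in my view, hardest step. After the Cholesky normalization the dynamic block has identity energy matrix, so under regularity of the pencil $(sE-(J-R)Q)$ the algebraic variables can be eliminated and the transfer function reduces to a rational matrix function on the dynamic block. The Arnoldi iteration then produces $V_r$ as an orthonormal basis of the Krylov subspace adapted to the shift $s_0$, and the classical Krylov-based one-sided Galerkin projection result (see \cite{Saa03,Fre05,BenS06}) preserves the first $r$ moments of the transfer function at $s_0$. The main obstacle is to verify that the moments computed from the full pHDAE $G(s)=B^T(sE-(J-R)Q)^{-1}B$ genuinely coincide with those of the eliminated dynamic subsystem and that the Arnoldi operator and starting block are consistently propagated through the Cholesky change of coordinates; once this compatibility is established, $r$-moment matching follows from the cited general theory.
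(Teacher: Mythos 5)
Your proposal is correct and follows essentially the same route as the paper: a Cholesky-based congruence via Lemma~\ref{TransfoTheo} with $U=\mathrm{diag}(K,I_{n_2},I_{n_3})$, $V=U^{-T}$, followed by the block Galerkin projection $V^\mathrm{m}_r$, with the structural axioms checked by congruence and the moment matching deferred to \cite{Fre05,BenS06}. Your explicit verification of the identity $V^\mathrm{m}_r(V^\mathrm{m}_r)^T\tilde Q V^\mathrm{m}_r=\tilde Q V^\mathrm{m}_r$, which justifies writing the projected system in the factored form $(J^\mathrm{m}-R^\mathrm{m})Q^\mathrm{m}$, is a detail the paper's proof passes over as ``trivial'' but is exactly the right point to make precise.
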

\begin{proof}
The port-Hamiltonian structure is trivially preserved by the $Q_{11}$-associated transformation and the subsequent Galerkin projection.. The matching of the first $r$ moments is proved in \cite{Fre05} for $s_0\in\mathbb{C}$ and in \cite{BenS06} for $s_0\in\infty$.
\end{proof}

\section{Numerical results}\label{sec:4}
\setcounter{equation}{0}
\noindent
In this section we investigate the performance of the model reduction methods, using benchmark examples from the literature, see, e.g., \cite{BeaGM17, BorG15, EggKLMM18, GugSW13, HeiSS08, MehS05} or \cite{Sty06}.  In order to perform model reduction, it is essential to identify all constraints arising from the physics of the problem as discussed in Section~\ref{sec:decoupling}. In many applications this can be done directly by exploiting the structure of the equations coming from the physical properties.

Considering the transfer function $G$ of the original full order system, we study the approximation quality of the various reduced order systems by comparing the relative errors $(G-G_r)/G$ with $G_r\in\{G^\mathrm{ec},G^\mathrm{fc}, G^\mathrm{m}\}$ being the transfer function of the reduced system. Usually this is done in the $L_2$-norm in the state-space formulation or in the $H_\infty$-norm in the frequency domain. Since the latter norm is only defined for pHDAEs of index at most one, for pHDAEs of higher index we present the errors for the index reduced system. The transfer function of a pHDAE of the form \eqref{BEAMXZ18} is  given by
\begin{align*}
{G}(s)= (B+P)^T(s E-(J-R)Q)^{-1}(B-P)+(S+N), \quad s\in\mathbb{C}.
\end{align*}
For pHDAEs of index at most one it is either a proper rational function or the sum of a proper rational function with a term that is constant in $s$.

The numerical results have been computed with Matlab 2017a on a Linux 64-Bit machine with an Intel\textsuperscript{\textregistered} Core\textsuperscript{\texttrademark} i7-6700 processor.
In the context of computing the error norms, see \cite{AliBMSV17,Sty06b}, it is necessary to solve
Lyapunov equations, for which we have used the M.M.E.S.S. Toolbox \cite{SakKB16}.

\subsection{Flow problems}
\noindent Consider an instationary incompressible fluid flow, prescribed in terms of velocity $v:\Omega\times [0,T]\rightarrow \mathbb{R}^2$ and pressure $p:\Omega\times [0,T]\rightarrow \mathbb{R}$ on the spatial domain $\Omega=(0,1)^2$ with boundary $\partial \Omega$ for the time period $[0,T]$, that is driven by external forces $f:\Omega\times [0,T]\rightarrow \mathbb{R}^2$ and dynamic viscosity $\nu>0$. The subsequently described models of partial differential equations (Stokes as well as Oseen equations) are closed by non-slip boundary conditions and appropriate initial conditions $v^0$. Spatial discretization by a finite difference method on a uniform staggered grid yield pHDAEs for the state $x=[ v_h^T \,\, p_h^T]^T$ with the semi-discretized values of the velocities $v_h(t)\in\mathbb{R}^{n_v}$ and pressures $p_h(t)\in\mathbb{R}^{n_p}$, $t\in [0,T]$ ($n_v, n_p\in \mathbb{N}$, cf.\ Remark~\ref{rem:discretization}).

\subsubsection*{Stokes equations}
A laminar flow can be modeled by the linear \emph{Stokes equations},
\begin{align*}
\partial_t v&= \nu\Delta v -\nabla p+f, &&\text{in }\Omega\times(0,T],\\
0&= - \operatorname{div}v,  &&\text{in }\Omega\times(0,T],\\
v&=0,  &&\text{on } \partial\Omega\times(0,T],\\
v&=v^0, &&\text{in }\Omega \times \{0\}.
\end{align*}
A spatially semi-discretized differential-algebraic system for $x=[ v_h^T \,\, p_h^T]^T$ completed with an appropriate output equation is given by
\begin{equation}\label{StokesDAE}
\begin{aligned}
\underbrace{\begin{bmatrix}I & 0\\0&0\end{bmatrix}}_E\begin{bmatrix}\dot v_h \\ \dot p_h \end{bmatrix}&=\Bigg(\underbrace{\begin{bmatrix}0 & -D^T\\D&0\end{bmatrix}}_J-\underbrace{\begin{bmatrix}-\mathcal{L}&0\\0&0\end{bmatrix}}_R\Bigg)
\begin{bmatrix}v_h \\  p_h \end{bmatrix} +\underbrace{\begin{bmatrix}F\\0\end{bmatrix}}_Bu,\\
y&=B^T x,
\end{aligned}
\end{equation}
with the symmetric negative definite discrete Laplace operator $\mathcal{L}\in\mathbb{R}^{n_v\times n_v}$, as well as the discrete divergence $D$ and gradient operators $D^T\in\mathbb{R}^{n_v\times n_p}$, $n_p<n_v$.  The operator $D$ usually has full row rank if the freedom in the pressure (which only occurs in differentiated form in the system) is removed, see Remark~\ref{rem:discretization} on page~\pageref{rem:discretization}. The initial conditions are $v_h(0)=v_{h}^0$ and consistently $p_h(0)=p_{h}^0$. The input $u$ with input matrix $F\in\mathbb{R}^{n_v\times m}$ results from the external forces. The output equation is supplemented accordingly regarding the port-Hamiltonian form \eqref{BEAMXZ18}. System~\eqref{StokesDAE} is obviously port-Hamiltonian with $Q=I$, $P=0$ and $S=N=0$, as $E^TQ=Q^TE\geq 0$, $J=-J^T$ and $R=R^T\geq 0$ holds.

For the index reduction of the pHDAE (\ref{StokesDAE}) (which is of differentiation-index two) we do not need the whole derivative array; instead we can easily identify the equations that have to be differentiated from the special structure of the system by performing, e.g., a singular value decomposition
\begin{align*}
D^T=U \begin{bmatrix}\Sigma_D & 0 \end{bmatrix}^TV^T, \qquad
\Sigma_D=\mathrm{diag}(\sigma_1,\dots,\sigma_{n_p})\in\mathbb{R}^{n_p\times n_p},
\end{align*}
with orthogonal matrices $U\in\mathbb{R}^{n_v\times n_v}$, $V\in\mathbb{R}^{n_p\times n_p}$ and singular values $\sigma_i> 0$, $i=1,\dots,n_p$. Setting ${Z}=V\Sigma_D$, performing an equivalence transformation with $U$ and splitting the state variable accordingly into three parts, we get the system
\begin{align*}
\begin{bmatrix} I&0&0\\0&I&0\\0&0&0\end{bmatrix}\begin{bmatrix}\dot{x}_1\\\dot{x}_2\\\dot{x}_3\end{bmatrix}
&=\left(\begin{bmatrix}0&0&-{Z}^T\\0&0&0\\{Z}&0&0\end{bmatrix}-\begin{bmatrix}-\mathcal{L}_{11}&-\mathcal{L}_{12}&0\\-\mathcal{L}_{12}^T&-\mathcal{L}_{22}&0\\0&0&0\end{bmatrix}\right)\begin{bmatrix}x_1\\x_2\\x_3\end{bmatrix}+\begin{bmatrix}
B_1\\B_2\\0\end{bmatrix}u,\\
y&=\begin{bmatrix}B_1^T&B_2^T&0\end{bmatrix}
\begin{bmatrix}x_1\\x_2\\x_3\end{bmatrix}.
\end{align*}
Obviously $x_1=0$, as the last equation is $0={Z}x_1$ with $Z$ invertible. This is the equation that has to be differentiated and inserted into the first equation to derive the second (hidden) algebraic constraint
\begin{align*}
Z^Tx_3=\mathcal{L}_{12}x_2+B_1u
\end{align*}
as well as a consistency condition for the initial value which relates the initial condition for $u$ and $x_2$ to that for $x_3$.
The second equation yields the underlying ODE of the system for the variable $x_2:[0,T]\rightarrow \mathbb{R}^{(n_v-n_p)}$ to be reduced and the output equation
\begin{equation}\label{StokesUOde}
\begin{aligned}
\dot{x}_2&=\mathcal{L}_{22}x_2+B_2u,\\
y&=B_2^Tx_2.
\end{aligned}
\end{equation}
Note that this equation can be interpreted as the discretized heat equation  in the set of divergence-free velocities \cite{EmmM13}.

In system~\eqref{StokesUOde} the skew-symmetric interconnection matrix is zero. Thus, FCRM cannot be applied directly as discussed in the previous section. Concerning ECRM, the splitting of the dynamic state is provided by Lyapunov balancing. The transformation matrices are computed by the Square-Root Algorithm \cite{Ant05}.
Note that in this case the reduced model by ECRM is equivalent to the one obtained by using Balanced Truncation due to the symmetry of the system matrix and the relation of the input and output matrices.

\subsubsection*{Oseen equations}
A flow model that is closer to the nonlinear \emph{Navier-Stokes equations} is given by the
\emph{Oseen equations}
\begin{align*}
\partial_t v&= -(a\cdot \nabla) v + \nu\Delta v -\nabla p+f, &&\text{in }\Omega\times(0,T],\\
0&= - \operatorname{div}v,  &&\text{in }\Omega\times(0,T],\\
v&=0,  &&\text{on } \partial\Omega\times(0,T],\\
v&=v^0, &&\text{in }\Omega \times \{0\}.
\end{align*}
The Oseen equations differ from the Stokes equations by the additional convective term with driving velocity $a:\Omega\rightarrow \mathbb{R}^2$. The associated spatially semi-discretized pHDAE for $x=[ v_h^T \,\, p_h^T]^T$ is given by
\begin{align*}
\begin{bmatrix}I&0\\0&0\end{bmatrix}\dot{x}&=\left(\begin{bmatrix}{A}&-D^T\\D&0\end{bmatrix}-\begin{bmatrix}
-\tilde{\mathcal{L}}&0\\0&0\end{bmatrix}\right)x+\begin{bmatrix}F\\0\end{bmatrix}u, \\
y&=B^T x, \qquad   B^T=\begin{bmatrix} F^T & 0 \end{bmatrix},
\end{align*}
where the appropriately discretized convective term is decomposed into its skew-symmetric part $A$ and its symmetric part. The last forms, together with the discrete Laplacian, the symmetric negative definite operator $\tilde{\mathcal{L}}$.

Analogously to the index reduction performed for the Stokes equations, we obtain $x_1=0$ as well as $Z^Tx_3=(A_{12}+\mathcal{\tilde L}_{12})x_2+B_1u$. The underlying ODE and the output equation are given by
\begin{equation}\label{OseenUOde}
\begin{aligned}
\dot{x}_2&=(A_{22}-(-\tilde{\mathcal{L}}_{22}))x_2+B_2u,\\
y&=B_2^Tx_2.
\end{aligned}
\end{equation}

In system~\eqref{OseenUOde} the skew-symmetric interconnection matrix is prescribed by $A_{22}$ which, depending on the discretization scheme, may or may not be invertible. If it is invertible, then in contrast to the Stokes problem, FCRM can be applied for model order reduction.  For ECRM and FCRM the balancing transformations may be computed via the Balancing Free Square-Root  Algorithm \cite{Var91} to preserve the structure and to avoid creating an additional energy matrix.
\begin{remark}
\label{rem:discretization}{\rm
In the numerical examples the flow domain $\Omega=(0,1)^2$ is partitioned into uniform quadratic cells of edge length $h=1/M$, $M\in \mathbb{N}$. We use finite differences on a staggered grid where the velocity components $v=[v^\xi \,\, v^\eta]^T$ are evaluated at the center of the cell faces to which they are normal and the pressure is taken at the cell centers. This procedure provides small discretization stencils and ensures numerical stability. The unknowns are
\begin{align*}
v^\xi_{i,j+0.5}(t)&\approx v^\xi(ih,(j+0.5)h,t), &&i=1,...,M-1,\quad  j=0,....,M-1\\
v^\eta_{i+0.5,j}(t)&\approx v^\eta((i+0.5)h,jh,t), && i=0,...,M-1, \quad j=1,....,M-1\\
p_{i+0.5,j+0.5}(t)&\approx p((i+0.5)h,(j+0.5)h,t), && i,j=0,...,M-1,
\end{align*}
for $t\in[0,T]$. Since the pressure is non-unique in the flow equations, we fix without loss of generality the value $p_{M-0.5,M-0.5}(t)=0$ for the numerical solution and discard this quantity from the unknowns. The unknowns are ordered row-wise (and for the velocity component-wise) in the vectors $v_h(t)\in \mathbb{R}^{n_v}$ and $p_h(t)\in \mathbb{R}^{n_p}$, where $n_v= 2M(M-1)$ and $n_p=M^2-1$, yielding the state  $x=[v_h^T \,\, p_h^T]^T:[0,T]\rightarrow \mathbb{R}^n$, $n=3M^2-2M-1$, of the pHDAE systems.
}
\end{remark}

\begin{figure}[t]
\includegraphics[width=0.49\textwidth]{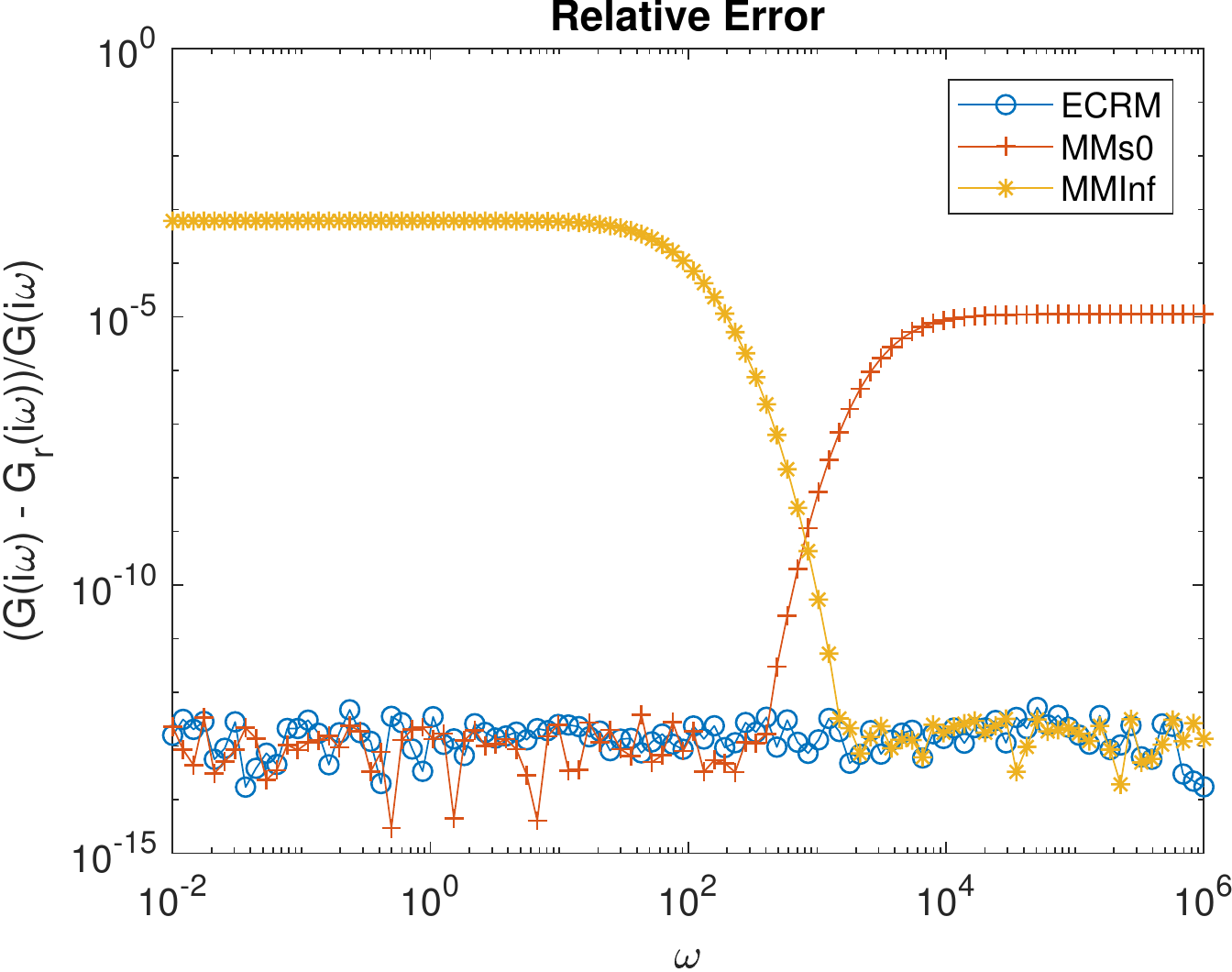}\hfill
\includegraphics[width=0.49\textwidth]{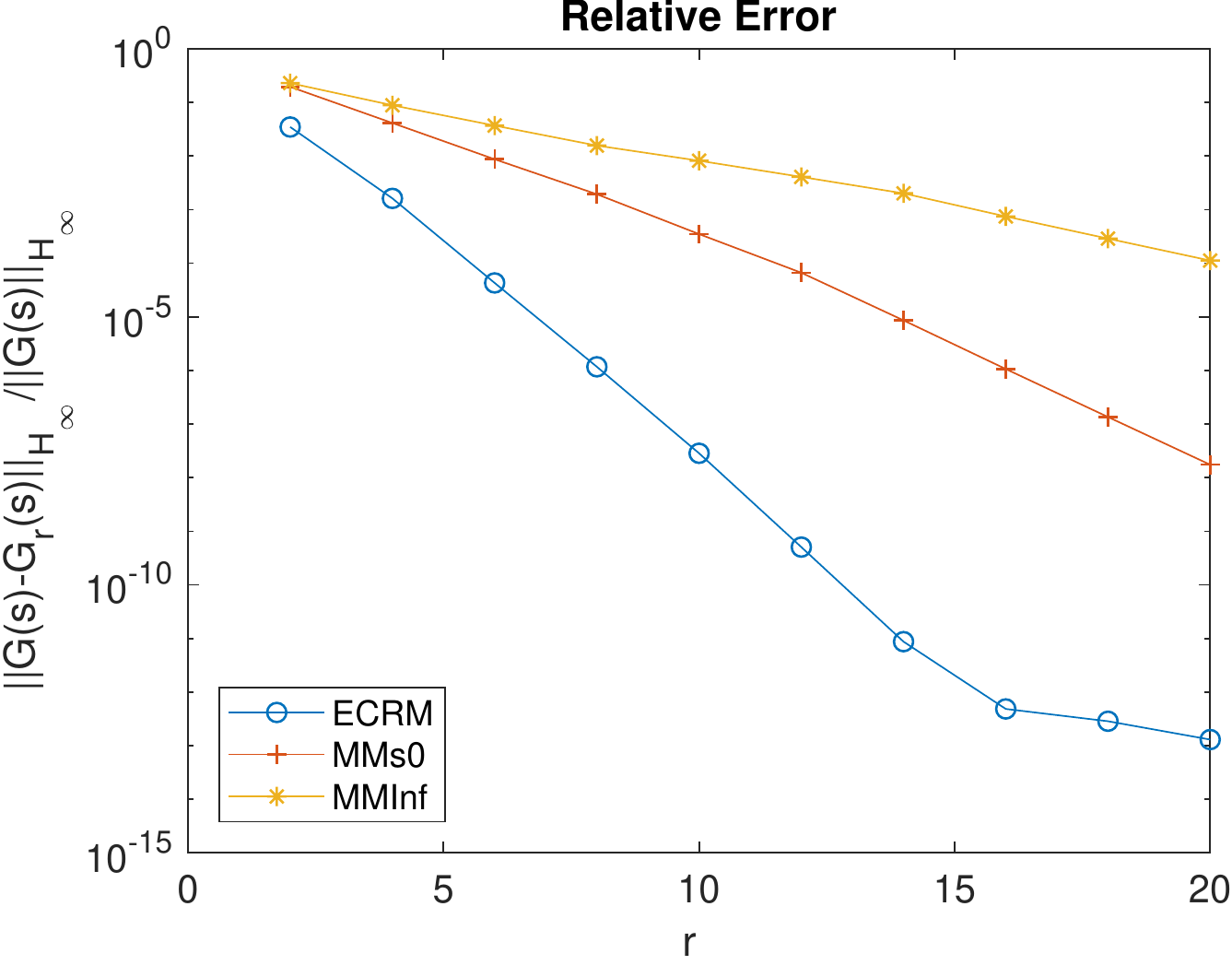}\\
\includegraphics[width=0.49\textwidth]{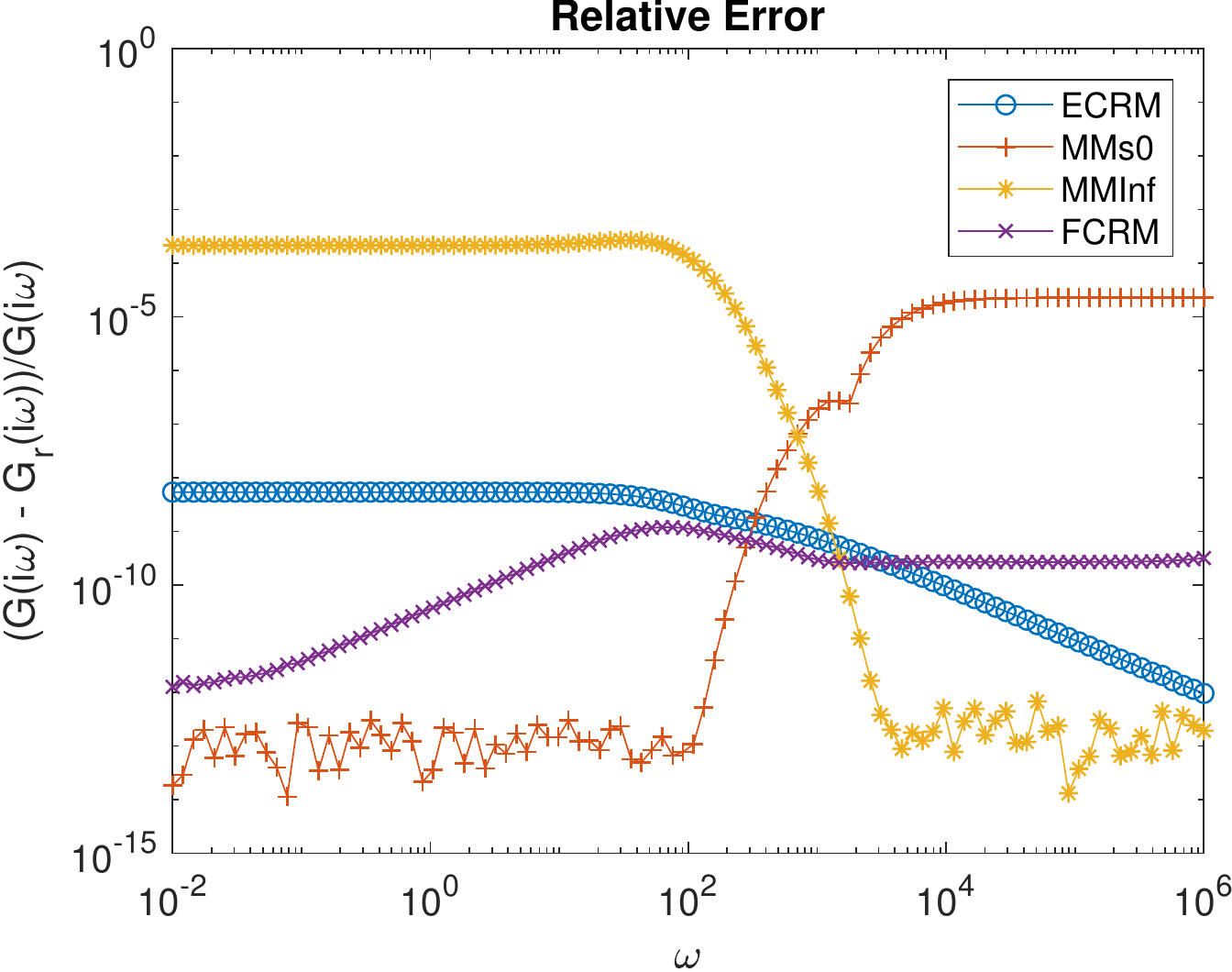}\hfill
\includegraphics[width=0.49\textwidth]{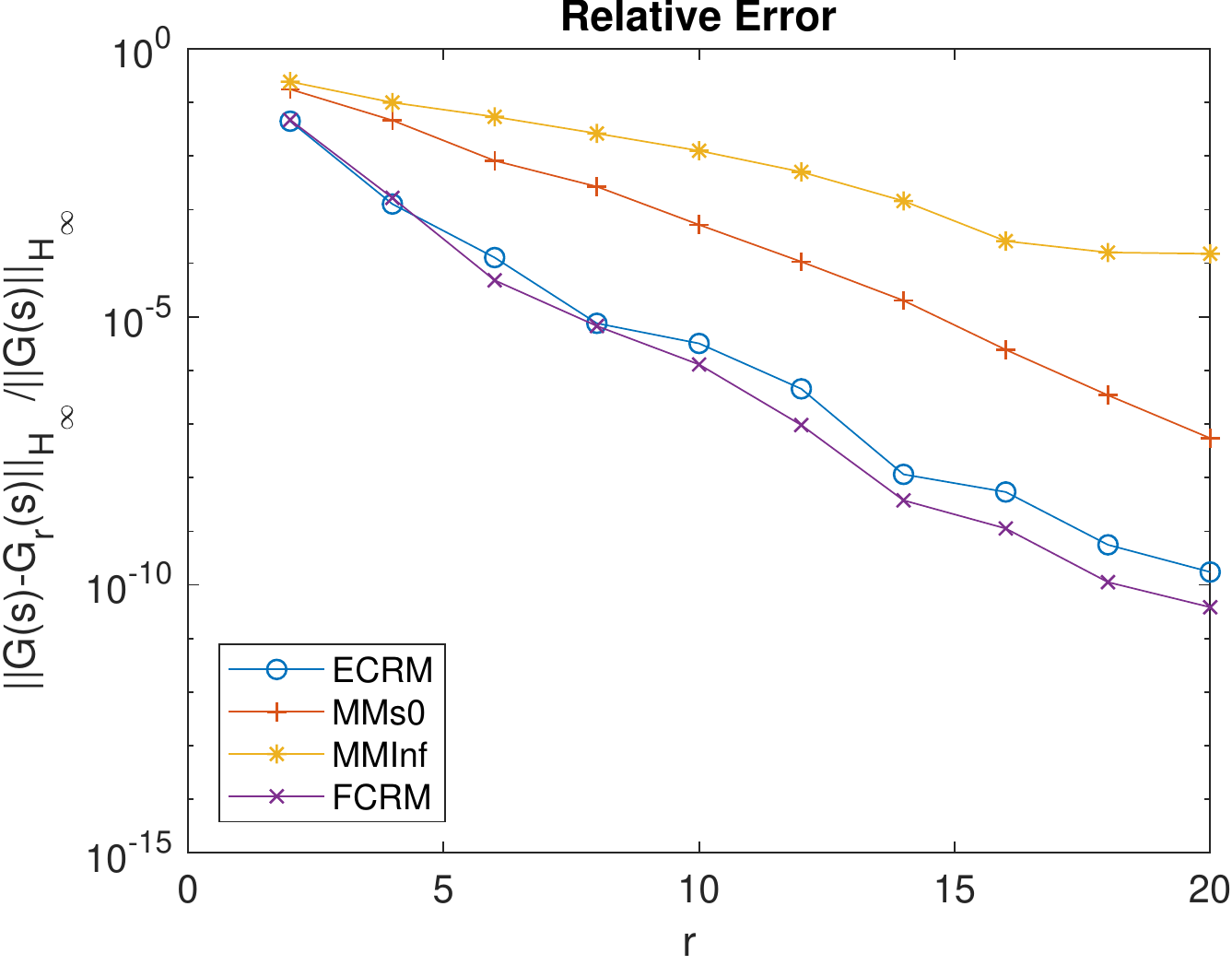}
\caption{Flow problems: Stokes \textit{(top)} and Oseen \textit{(bottom)}.
Relative errors for ECRM, FCRM as well as MM at $s_0=\infty$ and at $s_0=0$, plotted over frequency \textit{(left)} and in $H_\infty$-norm over reduced state size $r$ \textit{(right)}.}
\label{FlowROMs}
\end{figure}

We illustrate the model reduction techniques by comparing the approximation quality of their reduced transfer functions for both flow problems. The presented results are given for the example flow setup, where we use a random normally distributed input matrix $B\in (\mathcal{N}(0,10^{2}))^{n_v\times 1}$, the dynamic viscosity $\nu=1$ and, in case of the Oseen equations, the constant convective velocity $a\equiv[1\,\,1]^T$. Applying a spatial resolution of $M=23$, the states have a size of $n=1540$ for the full order pHDAE models and of $n_2=484$ for the underlying ODEs to be reduced. Figure~\ref{FlowROMs} shows the relative errors in the spectral norm for the reduced models of size $r=16$ and in the $H_\infty$-norm for $r\in[2,20]$. The results of MM are similar for the Stokes and Oseen equations. As expected, MM at $s_0=\infty$ and at $s_0=0$ yields only negligibly small errors for high or low frequencies, respectively. In the $H_\infty$-norm MM at $s_0=0$ performs better than MM at $s_0=\infty$.
\begin{figure}[ht]
\includegraphics[scale=0.5]{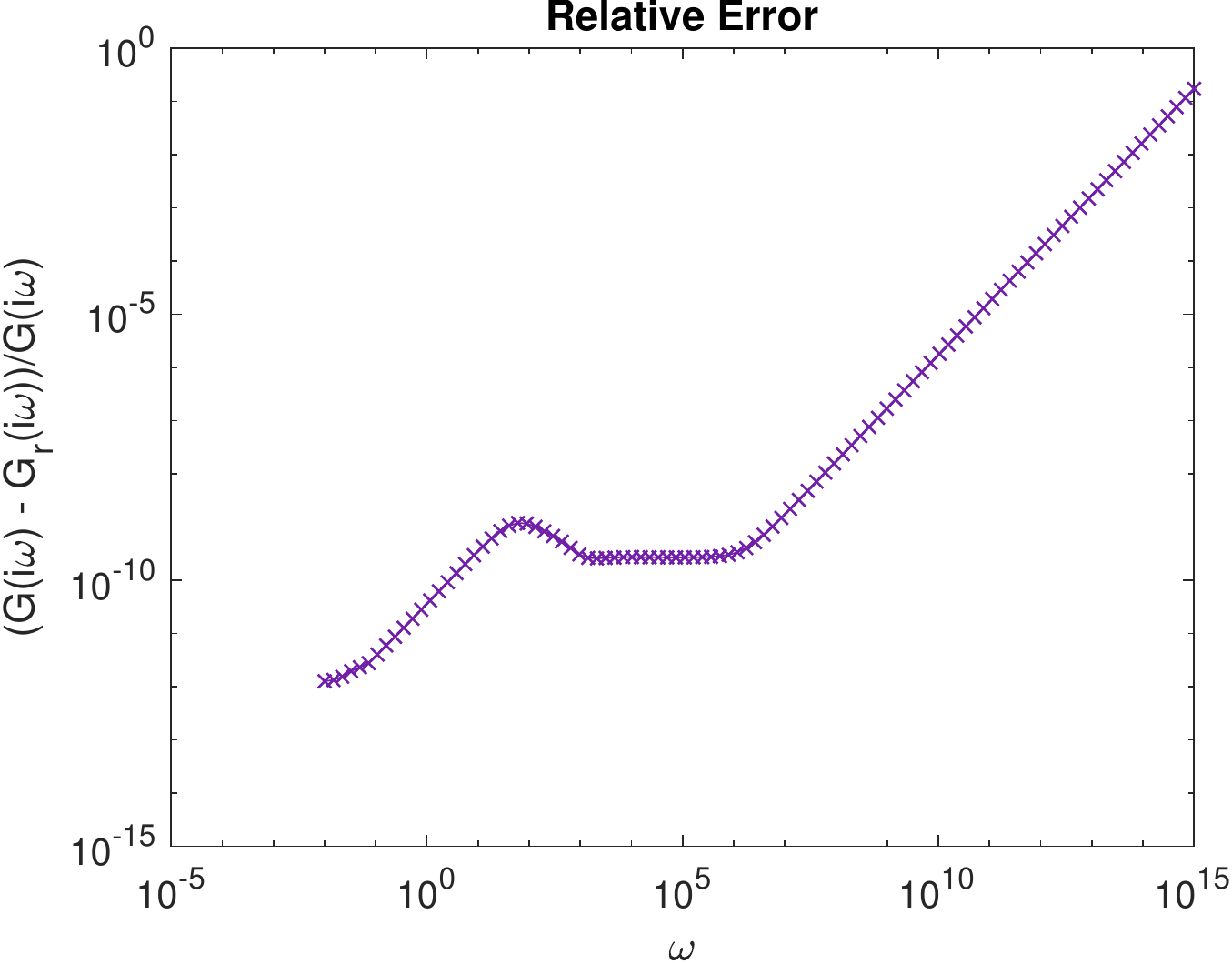}
\caption{Error behavior of FCRM for very high frequencies due to the additional feed-through term in the reduced Oseen model.}
\label{OseenROMs}
\end{figure}
For the Stokes problem the error of ECRM is small for all frequencies, oscillating around $\mathcal{O}(10^{-13})$, whereas for the Oseen problem it decreases from $\mathcal{O}(10^{-8})$ to $\mathcal{O}(10^{-13})$ for increasing frequencies. In the $H_\infty$-norm ECRM outperforms MM for both flow problems. The same error trends can be observed in the $H_2$-norm. Consequently, ECRM yields better reduced models than the moment matching methods globally, as even for low and high frequencies the relative errors only differ slightly from the respective errors of MM.
FCRM is not applicable to the Stokes problem and to the Oseen problem only if the invertibility requirement on the skew-symmetric interconnection submatrix $J^{ss}_{11}$ is satisfied. This implies in this example setup that the size $r$ of the reduced model has to be even. Here, FCRM yields smaller errors for low frequencies than ECRM, but for higher frequencies the error increases monotonically because of the additional feed-through term in the reduced system, see also Figure~\ref{OseenROMs}. However, in the $H_\infty$-norm FCRM even outperforms ECRM, whereas the $H_2$-norm is unbounded for systems with nonzero feed-through terms.

\subsection{Damped mass-spring system}
\noindent The holonomically constrained damped mass-spring system is a multibody problem that describes the one-dimensional dynamics of $g$ connected mass points in terms of their positions $p:[0,T]\rightarrow \mathbb{R}^g$, velocities $v:[0,T]\rightarrow \mathbb{R}^g$ and a Lagrange multiplier $\lambda:[0,T]\rightarrow \mathbb{R}$, see Figure \ref{MultiBodyPic}. In the chain of mass points the $i$th mass of weight $m_i$ is connected to the $(i+1)st$ mass by a spring and a damper with constants $k_i$ and $d_i$ and also to the ground by a spring and a damper with the constants $\kappa_i$ and $\delta_i$, respectively, where $m_i$, $k_i$, $d_i$, $\kappa_i$, $\delta_i>0$. Furthermore, the first and the last mass points are connected by a rigid bar. The vibrations are driven by an external force $u:[0,T]\rightarrow\mathbb{R}$ (control input) acting on the first mass point. The resulting DAE system of size $n=2g+1$ is not port-Hamiltonian and has differentiation-index three. In first order formulation it is given by
\begin{align*}
\begin{bmatrix} I & 0 & 0\\ 0 & M & 0\\ 0&0&0\end{bmatrix}\begin{bmatrix}\dot{p}\\ \dot{v}\\ \dot{\lambda}\end{bmatrix}&=\begin{bmatrix}0&I&0\\K&D&-G^T\\
G&0&0\end{bmatrix}\begin{bmatrix}
p\\v\\\lambda\end{bmatrix}+\begin{bmatrix}0\\F\\0\end{bmatrix} u
\end{align*}
with mass matrix $M=\mathrm{diag}(m_1,\dots,m_g)$, tridiagonal stiffness and damping matrices $K$, $D\in \mathbb{R}^{g\times g}$, constraint matrix $G=[1\,\,0 \dots 0\,\,-1]\in \mathbb{R}^{1\times g}$ and input matrix $F=e_1\in \mathbb{R}^{g\times 1}$. Assuming $K$ and $D$ to be symmetric negative semi-definite, the multibody problem can be formulated as a pHDAE of differentiation-index two by replacing the algebraic constraint $Gp=0$ by its first derivative $Gv=0$ and adding an appropriate output equation,
\begin{equation}
\begin{aligned}\label{MultiBodyPH}
\underbrace{\begin{bmatrix} I & 0 & 0\\ 0 & M & 0\\ 0&0&0\end{bmatrix}}_E\begin{bmatrix}\dot{p}\\ \dot{v}\\ \dot{\lambda}\end{bmatrix}&=\left(\underbrace{\begin{bmatrix}0&I&0\\-I&0&-G^T\\
0&G&0\end{bmatrix}}_J-\underbrace{\begin{bmatrix}0&0&0\\0&-D&0\\0&0&0\end{bmatrix}}_R\right)
\underbrace{\begin{bmatrix}
-K&0&0\\0&I&0\\0&0&I
\end{bmatrix}}_Q \begin{bmatrix}
p\\v\\\lambda\end{bmatrix}+\underbrace{\begin{bmatrix}0\\F\\0\end{bmatrix}}_B u\\
y&=B^TQx.
\end{aligned}
\end{equation}
Obviously, $J=-J^{T}$, $R=R^T\geq 0$,  $E^TQ=Q^TE\geq 0$ and $Q^TRQ\geq 0$ hold. But note that $(J-R)Q$ is singular such that for $s=0$ the matrix  $(s E-(J-R)Q)$ is not invertible.
\begin{figure}[t]
\includegraphics[scale=0.4]{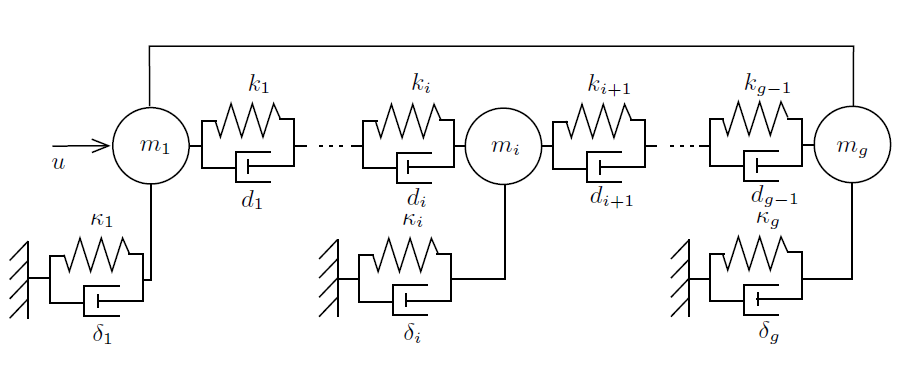}
\caption{Damped mass-spring system with holonomic constraint \cite{MehS05}}\label{MultiBodyPic}
\end{figure}

The structure of the equations simplify a further index reduction for \eqref{MultiBodyPH} analogous to that for the flow problems. The singular value decomposition of $G^T$, i.e., $GV=[Z\,\, 0]$ with orthogonal matrix $V=[V_1 \,\, V_2]$, $V_1\in\mathbb{R}^{g\times 1}$, $V_2\in\mathbb{R}^{g \times (g-1)}$ as well as $Z\in \mathbb{R}^{1\times 1}$ invertible, yields
 \begin{align*}\label{MultiBodyPHReg}
\begin{bmatrix} I & 0 &0& 0\\ 0 & M_{11} & M_{12} & 0\\ 0& M_{12}^T&M_{22}&0\\0& 0&0&0\end{bmatrix}\begin{bmatrix}\dot{p}\\ \dot{v}_1\\ \dot{v}_2\\ \dot{\lambda}\end{bmatrix}&=\begin{bmatrix}0&V_1&V_2&0\\-V_1^T&D_{11}&D_{12}&-Z^T\\-V_2^T&D_{12}^T&D_{22}&0\\0&Z&0&0\end{bmatrix}\begin{bmatrix}
-K&0&0&0\\0&I&0&0\\0&0&I&0\\0&0&0&I
\end{bmatrix}\begin{bmatrix}
p\\v_1\\v_2\\\lambda\end{bmatrix}+\begin{bmatrix}0\\B_1\\B_2\\0\end{bmatrix} u.
\end{align*}
The last equation $Zv_1=0$ implies that $v_1=0$. Differentiating this equation and inserting it into the second equation yields the hidden constraint for the Lagrange multiplier
\begin{align*}
{Z}^{T}\lambda&=-M_{12}\dot{v}_2+V_1^TKp+D_{12}v_2+B_1u,
\end{align*}
which also imposes a consistency condition for the initial value.
The underlying ODE of size $n_1=2g-1$ together with the output equation are given by
\begin{equation}
\begin{aligned}\label{MBSuODE}
\begin{bmatrix}I&0\\0&M_{22}\end{bmatrix}\begin{bmatrix}\dot{p}\\\dot{v}_2
\end{bmatrix}&=\left(\begin{bmatrix}0&V_2\\-V_2^T&0\end{bmatrix}-\begin{bmatrix}0&0\\0&-D_{22}\end{bmatrix}\right)\begin{bmatrix}
-K&0\\0&I\end{bmatrix}\begin{bmatrix}p\\v_2\end{bmatrix}+\begin{bmatrix}
0\\B_2\end{bmatrix}u,\\
y&=\begin{bmatrix}
0 & B_2^T\end{bmatrix} \begin{bmatrix}
-K&0\\0&I\end{bmatrix} \begin{bmatrix}p\\v_2\end{bmatrix}.
\end{aligned}
\end{equation}

We present numerical results for an example setup, where the parameters of the mass-spring system are set to be $m_i=100$ for $i=1,...,g$ and $k_i=\kappa_j=2$, $d_i=\delta_j=5$ for $i=1,...,g-1$, $j=2,...,g-1$ as well as $\kappa_1=\kappa_g=4$, and $\delta_1=\delta_g=10$. Choosing $g=6000$ yields a state of size $n=12001$ for the original DAE and of $n_1=11999$ for the underlying ODE to be reduced. Since the dimension is odd, the skew-symmetric interconnection matrix is singular and also the relevant submatrix for FCRM is singular as well. But MM at almost all $s_0\in\mathbb{C}\backslash\{0\}$ and ECRM can be applied. Figure \ref{MBSNorms} shows the respective relative errors of the transfer functions in spectral norm for the reduced size $r=10$ and in the $H_\infty$-norm for $r\in[2,20]$. As expected, MM at $s_0=\infty$ and $s_0=10^{-10}$ yields outstanding approximations (errors of order $\mathcal{O}(10^{-15})$) for high and low frequencies, respectively. ECRM, in contrast, provides a uniformly good approximation quality of order $\mathcal{O}(10^{-5})$, independently of the chosen frequency. In the $H_\infty$- and the $H_2$-norms it even outperforms the moment matching versions by one up to two orders.
\begin{figure}[t]
\includegraphics[width=0.49\textwidth]{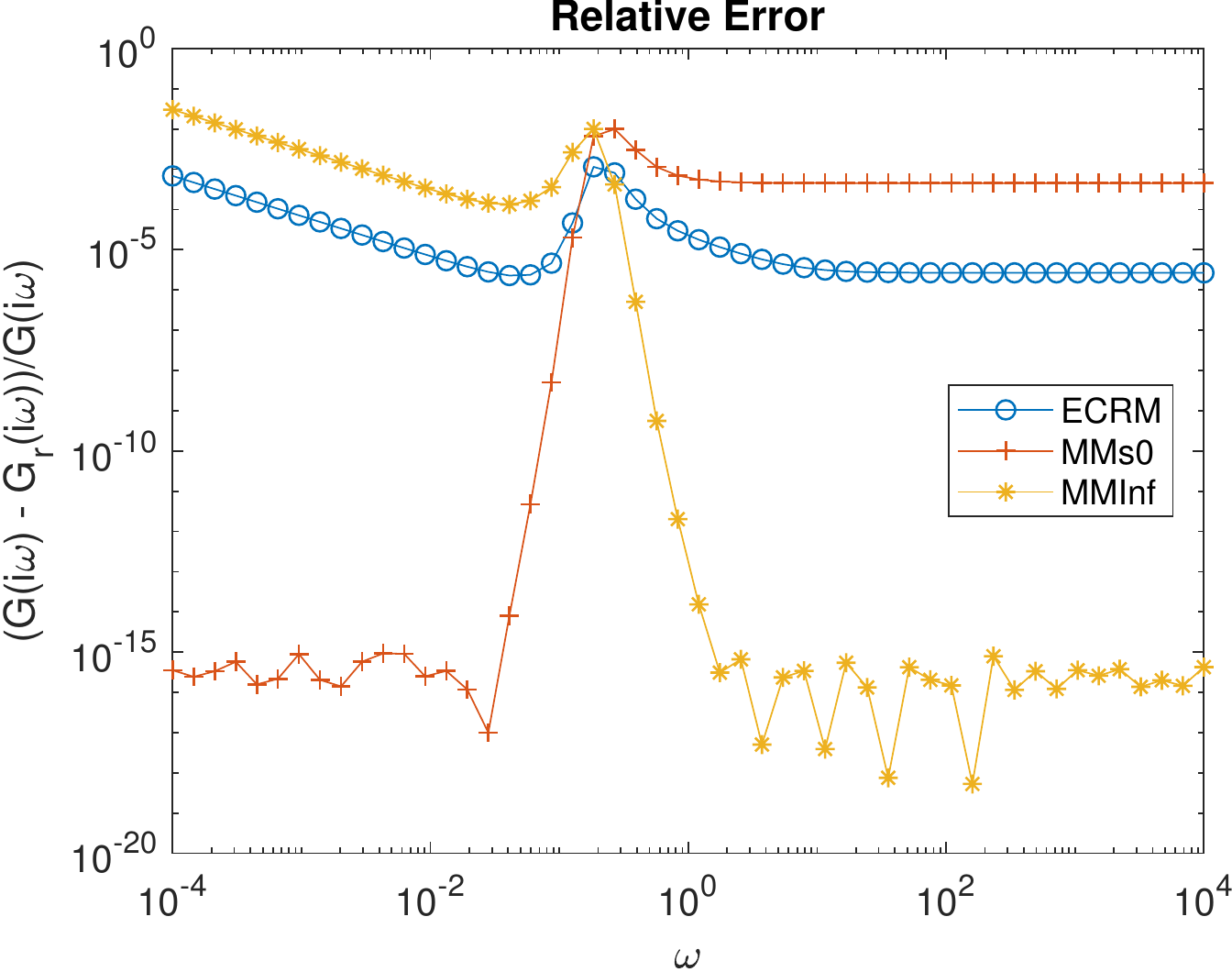}\hfill
\includegraphics[width=0.49\textwidth]{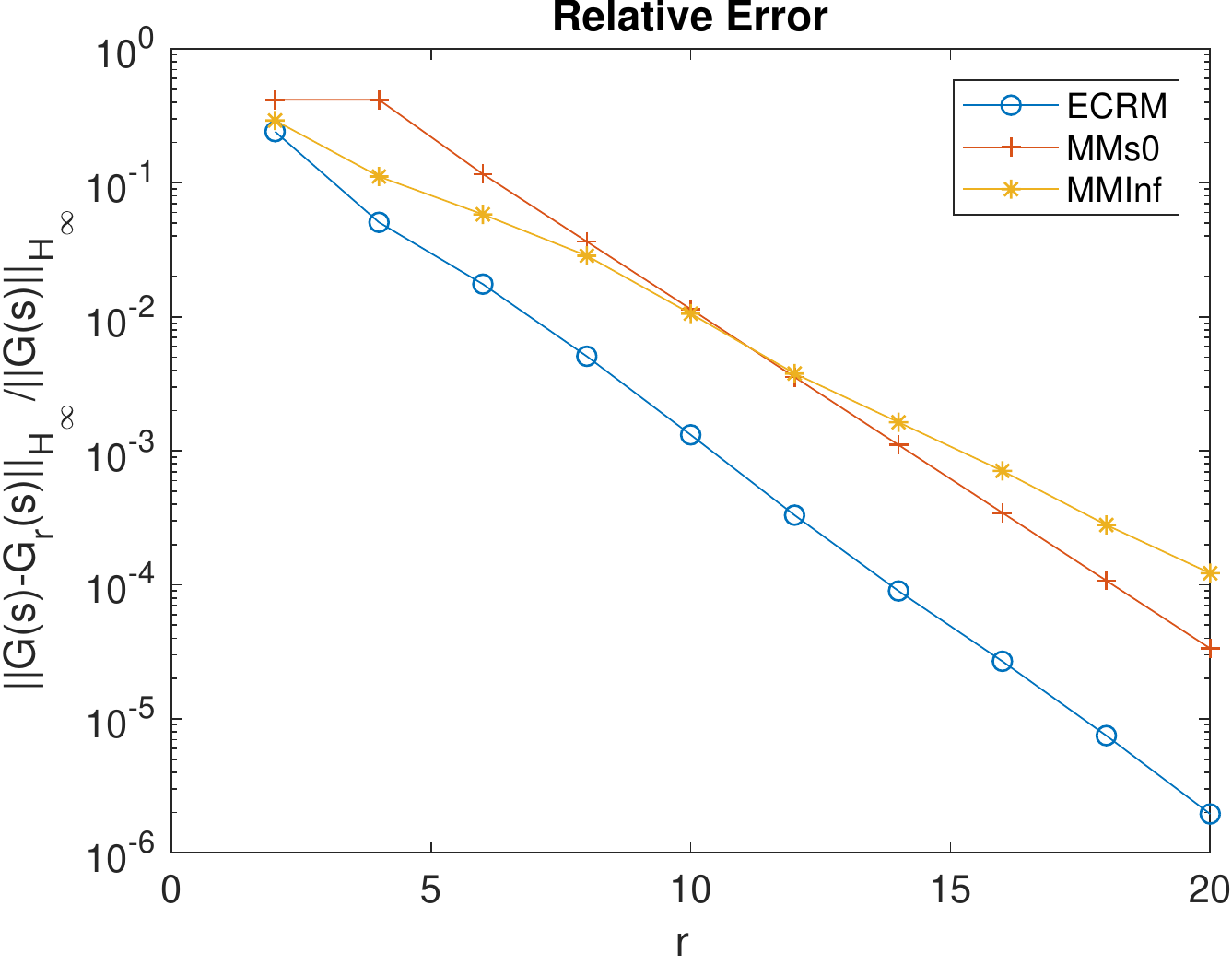}\\
\includegraphics[width=0.49\textwidth]{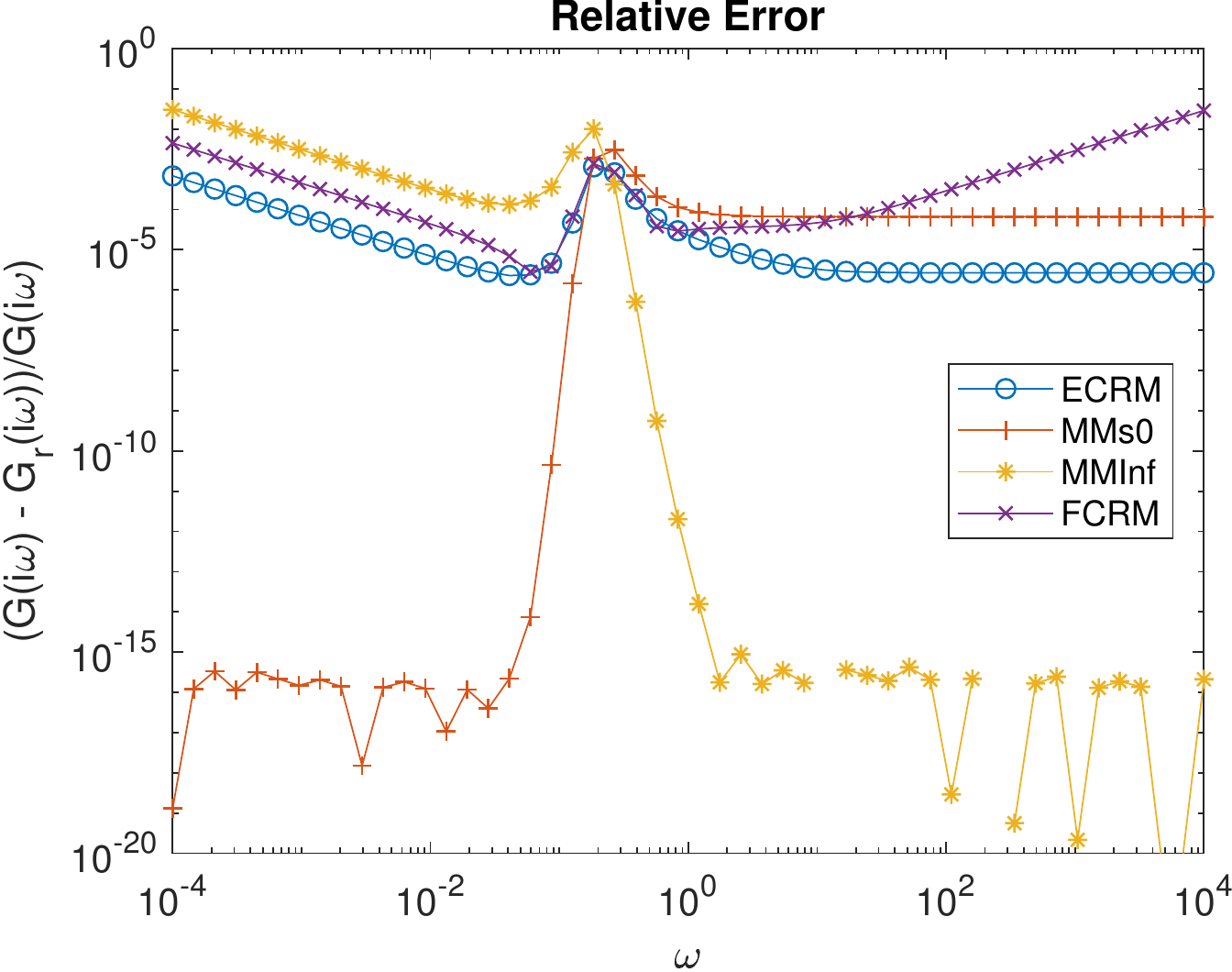}\hfill
\includegraphics[width=0.49\textwidth]{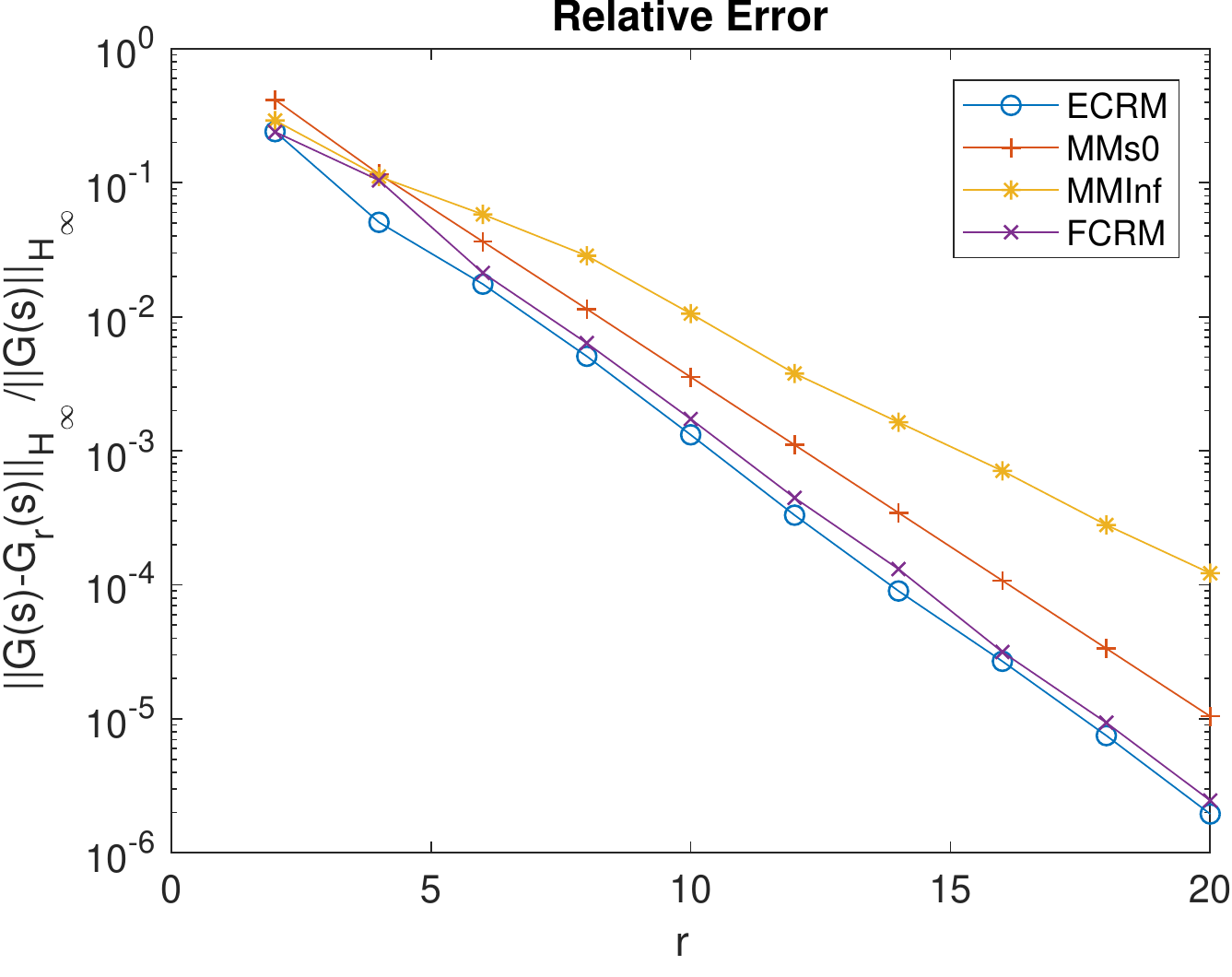}
\caption{Mass-spring system: index-reduced formulation \eqref{MultiBodyPH}, $n=2g+1$ \textit{(top)} and index-reduced formulation via minimal extension \eqref{MultiBodyME}, $n=2g+2$ \textit{(bottom)}.
Relative errors of reduced transfer functions plotted over frequency \textit{(left)} and in $H_\infty$-norm over reduced state size $r$ \textit{(right)}. (MM is performed at $s_0=\infty$ as well as at $s_0=10^{-10}$ for \eqref{MultiBodyPH} and at $s_0=0$ for \eqref{MultiBodyME}.)  }
\label{MBSNorms}
\end{figure}
\begin{remark}{\rm 
Alternatively to \eqref{MultiBodyPH}, the damped mass-spring system can be also formulated by keeping the original constraint $Gp=0$ and adding the additional constraint $Gv=0$ to match the symmetry structure. This  is called index reduction by minimal extension, see \cite{KunM06}, and reduces the differentiation-index to two in  the system
\begin{align}\label{MultiBodyME}
\begin{bmatrix} I & 0 & 0 & 0\\ 0 & M & 0 & 0 \\ 0&0&0&0\\0&0&0&0\end{bmatrix}\begin{bmatrix}\dot{p}\\\dot{v}\\\dot{\lambda_1}\\\dot{{\lambda_2}}\end{bmatrix}&
=\begin{bmatrix}0&I&0&-G^T\\K&D&-G^T&0\\0&G&0&0\\  G&0&0&0
\end{bmatrix}
\begin{bmatrix}p\\v\\\lambda_1\\{\lambda_2}
\end{bmatrix}+\begin{bmatrix}0\\B\\0\\0\end{bmatrix}u.
\end{align}
The resulting system \eqref{MultiBodyME} is of size $n=2g+2$, has a regular system matrix and a regular matrix pencil, but is not in port-Hamiltonian form. However, using again the singular value decomposition of $G^T$, solving the last equation and inserting the derivative yields a pHODE for $[p_2^T\,\, v_2^T]:[0,T]\rightarrow \mathbb{R}^{n_1}$, $n_1=2(g-1)$  of the form
\begin{equation}
\begin{aligned}\label{MbsuODE2}
\begin{bmatrix}I&0\\0&M_{22}\end{bmatrix}\begin{bmatrix}\dot{p}_2\\\dot{v}_2\end{bmatrix}&=
\left(\begin{bmatrix}0&I\\-I&0\end{bmatrix}-\begin{bmatrix}0&0\\0&-D_{22}\end{bmatrix}\right)\begin{bmatrix}-K_{22}&0\\0&I\end{bmatrix}\begin{bmatrix}p_2\\v_2\end{bmatrix}+
\begin{bmatrix}0\\B_2\end{bmatrix}u,\\
y&=\begin{bmatrix}
0 & B_2^T\end{bmatrix} \begin{bmatrix}
-K_{22}&0\\0&I\end{bmatrix} \begin{bmatrix}p_2\\v_2\end{bmatrix}
\end{aligned}
\end{equation}
whose interconnection matrix is invertible.
The other variables satisfy $v_1=p_1=\lambda_2=0$ and $Z^T\lambda_1=-M_{12}\dot{v}_2+K_{12}p_2+D_{12}v_2+B_{1}u$. To this formulation, also FCRM and MM at $s_0=0$ are applicable. While ECRM and MM at $s_0=\infty$ yield analogous results independent of the problem formulation, MM at $s_0=0$ for \eqref{MbsuODE2} provides a slightly better $H_\infty$-approximation than MM at $s_0=10^{-10}$ for \eqref{MBSuODE}. FCRM shows in general a similar approximation behavior as ECRM but suffers from an error drift off for high frequencies caused by its additional feed-through terms (cf.\ Figure~\ref{MBSNorms}).
}
\end{remark}

\section{Conclusion}\label{sec:5}
\noindent
The power conservation methods (ECRM and FCRM) as well as moment matching via Galerkin projections are established structure-preserving model reduction techniques for standard port-Hamiltonian systems of ordinary differential equations. In this paper we have adapted them to handle also port-Hamiltonian differential-algebraic systems of differentiation-index one or two. Making use of an appropriate decoupling of differential and algebraic variables, the dynamic state is reduced, while the properties and all explicit and hidden constraints of the pHDAE are preserved. The performance of the techniques has been illustrated for benchmark problems arising from  spatially discretized flow problems and multibody systems. ECRM shows similarities to Balanced Truncation, if a Lyapunov balancing is performed. Therefore, as expected, ECRM outperforms moment matching when studying the reduction errors in $H_\infty$- and/or $H_2$-norms, whereas moment matching yields better local approximations in the spectral norm.
The performance of FCRM is comparable to ECRM, but it may suffer from an error increase for high frequencies caused by the feed-through terms generated in the reduced model. Moreover, its applicability is limited.

\subsection*{Acknowledgements}
\noindent The authors acknowledge the support by the German BMBF, Project EiFer, and the German BMWi, Project MathEnergy.



\begin{thebibliography}{10}

\bibitem{AliBMSV17}
{\sc N.~Aliyev, P.~Benner, E.~Mengi, P.~Schwerdtner, and M.~Voigt}, {\em
  Large-scale computation of {$\mathcal{L}_\infty$}-norms by a greedy subspace
  method}, SIAM Journal on Matrix Analysis and Applications, 38 (2017),
  pp.~1496--1516.

\bibitem{Ant05}
{\sc A.~C. Antoulas}, {\em Approximation of Large-Scale Dynamical Systems},
  {SIAM} Publications, Philadelphia, 2005.

\bibitem{BeaGM17}
{\sc C.~Beattie, S.~Gugercin, and V.~Mehrmann}, {\em Model reduction for
  large-scale dynamical systems with inhomogeneous initial conditions}, Systems
  and Control Letters, 99 (2017), pp.~99--106.

\bibitem{BeaMXZ18}
{\sc C.~Beattie, V.~Mehrmann, H.~Xu, and H.~Zwart}, {\em Linear
  port-{H}amiltonian descriptor systems}, Mathematics of Control, Signals, and
  Systems, 30 (2018), pp.~1--27.

\bibitem{BenMS05}
{\sc P.~Benner, V.~Mehrmann, and D.~C. Sorensen}, eds., {\em {D}imension
  {R}eduction of {L}arge-{S}cale {S}ystems}, vol.~45 of Lecture Notes in
  Computational Science and Engineering, Springer, Berlin, 2005.

\bibitem{BenS06}
{\sc P.~Benner and V.~I. Sokolov}, {\em Partial realization of descriptor
  systems}, Systems and Control Letters, 55 (2006), pp.~929--938.

\bibitem{BenS17}
{\sc P.~Benner and T.~Stykel}, {\em Model order reduction for
  differential-algebraic equations: A survey}, in Surveys in
  Differential-Algebraic Equations. {IV}, Springer, Heidelberg, 2017,
  pp.~107--160.

\bibitem{BorG15}
{\sc J.~Borggaard and S.~Gugercin}, {\em Model reduction for {DAE}s with an
  application to flow control}, in Active Flow and Combustion Control 2014,
  Springer, Cham, 2015, pp.~381--396.

\bibitem{BreCP96}
{\sc K.~E. Brenan, S.~L. Campbell, and L.~R. Petzold}, {\em Numerical Solution
  of Initial-Value Problems in Differential Algebraic Equations}, {SIAM}
  Publications, Philadelphia, 2nd~ed., 1996.

\bibitem{ByeGM97}
{\sc R.~{Byers}, T.~{Geerts}, and V.~{Mehrmann}}, {\em Descriptor systems
  without controllability at infinity}, SIAM Journal on Control and
  Optimization, 35 (1997), pp.~462--479.

\bibitem{CamKM12}
{\sc S.~L. {Campbell}, P.~{Kunkel}, and V.~{Mehrmann}}, {\em Regularization of
  linear and nonlinear descriptor systems}, in Control and Optimization with
  Differential-Algebraic Constraints, no.~23 in Advances in Design and Control,
  Philadelphia, PA, 2012, {SIAM} Publications, Philadelphia, pp.~17--36.

\bibitem{ChaBG16}
{\sc S.~Chaturantabut, C.~Beattie, and S.~Gugercin}, {\em Structure-preserving
  model reduction for nonlinear port-{H}amiltonian systems}, SIAM Journal on
  Scientific Computing, 38 (2016), pp.~B837--B865.

\bibitem{EggKLMM18}
{\sc H.~Egger, T.~Kugler, B.~Liljegren-Sailer, N.~Marheineke, and V.~Mehrmann},
  {\em On structure-preserving model reduction for damped wave propagation in
  transport networks}, SIAM Journal on Scientific Computing, 40 (2018),
  pp.~A331--A365.

\bibitem{EmmM13}
{\sc E.~{Emmrich} and V.~{Mehrmann}}, {\em Operator differential-algebraic
  equations arising in fluid dynamics}, Computer Methods in Applied
  Mathematics, 13 (2013), pp.~443--470.

\bibitem{Fre05}
{\sc R.~W. Freund}, {\em Pad\'e-type model reduction of second-order and
  higher-order linear dynamical systems}, in Dimension Reduction of Large-Scale
  Systems, Springer, Berlin, 2005, pp.~191--223.

\bibitem{GolV96}
{\sc G.~H. Golub and C.~F. {Van Loan}}, {\em Matrix Computations}, The Johns
  Hopkins University Press, Baltimore, MD, 3rd~ed., 1996.

\bibitem{GraMQSW16}
{\sc N.~Gr{\"a}bner, V.~Mehrmann, S.~Quraishi, C.~Schr\"oder, and U.~{von
  W}agner}, {\em Numerical methods for parametric model reduction in the
  simulation of disc brake squeal}, Zeitschrift f{\"u}r Angewandte Mathematik
  und Mechanik, 96 (2016), pp.~1388--1405.

\bibitem{GugPBS12}
{\sc S.~Gugercin, R.~V. Polyuga, C.~Beattie, and A.~{van der S}chaft}, {\em
  Structure-preserving tangential interpolation for model reduction of
  port-{H}amiltonian systems}, Automatica, 48 (2012), pp.~1963--1974.

\bibitem{GugPBS09}
{\sc S.~Gugercin, R.~V. Polyuga, C.~A. Beattie, and A.~van~der Schaft}, {\em
  Interpolation-based {H2} model reduction for port-{H}amiltonian systems}, in
  Proceedings 48th IEEE Conference on Decision and Control, {IEEE}, 2009,
  pp.~5362--5369.

\bibitem{GugSW13}
{\sc S.~Gugercin, T.~Stykel, and S.~Wyatt}, {\em Model reduction of descriptor
  systems by interpolatory projection methods}, SIAM Journal on Scientific
  Computing, 35 (2013), pp.~B1010--B1033.

\bibitem{HeiSS08}
{\sc M.~Heinkenschloss, D.~C. Sorensen, and K.~Sun}, {\em Balanced truncation
  model reduction for a class of descriptor systems with application to the
  {O}seen equations}, SIAM Journal on Scientific Computing, 30 (2008),
  pp.~1038--1063.

\bibitem{KotL18}
{\sc P.~Kotyczka and L.~Lef{\`e}vre}, {\em Discrete-time port-{H}amiltonian
  systems: A definition based on symplectic integration}, IFAC-Papers OnLine,
  51 (2018), pp.~125--130.

\bibitem{KunM06}
{\sc P.~Kunkel and V.~Mehrmann}, {\em Differential-Algebraic Equations},
  European Mathematical Society (EMS), Z{\"u}rich, 2006.

\bibitem{MehMW18}
{\sc C.~Mehl, V.~Mehrmann, and M.~Wojtylak}, {\em Linear algebra properties of
  dissipative port-{H}amiltonian descriptor systems}, SIAM Journal on Matrix
  Analysis and Applications, 39 (2018), pp.~1489--1519.

\bibitem{MehS05}
{\sc V.~Mehrmann and T.~Stykel}, {\em Balanced truncation model reduction for
  large-scale systems in descriptor form}, in Dimension Reduction of
  Large-Scale Systems, Springer, Berlin, 2005, pp.~83--115.

\bibitem{PolS10}
{\sc R.~V. Polyuga and A.~van~der Schaft}, {\em Structure-preserving model
  reduction of port-{H}amiltonian systems by moment matching at infinity},
  Automatica, 46 (2010), pp.~665--672.

\bibitem{PolS11}
\leavevmode\vrule height 2pt depth -1.6pt width 23pt, {\em Structure-preserving
  moment matching for port-{H}amiltonian systems: {A}rnoldi and {L}anczos},
  IEEE Transactions on Automatic Control, 56 (2011), pp.~1458--1462.

\bibitem{PolS12}
\leavevmode\vrule height 2pt depth -1.6pt width 23pt, {\em Effort- and
  flow-constraint reduction methods for structure-preserving model reduction of
  port-{H}amiltonian systems}, Systems and Control Letters, 61 (2012),
  pp.~412--421.

\bibitem{Ria08}
{\sc R.~Riaza}, {\em Differential-Algebraic Systems. Analytical Aspects and
  Circuit {A}pplications}, World Scientific Publishing Co. Pte. Ltd.,
  Hackensack, NJ., 2008.

\bibitem{Saa03}
{\sc Y.~Saad}, {\em Iterative Methods for Sparse Linear Systems}, {SIAM}
  Publications, Philadelphia, 2nd~ed., 2003.

\bibitem{SakKB16}
{\sc J.~Saak, M.~Köhler, and P.~Benner}, {\em {M-M.M.E.S.S. 1.0.1} -- {T}he
  matrix equations sparse solvers library}, 2016.
\newblock see also: www.mpi-magdeburg.mpg.de/projects/mess.

\bibitem{Sch17_ppt}
{\sc L.~Scholz}, {\em Condensed forms for linear port-{H}amiltonian descriptor
  systems}, Preprint 09--2017, Institut f\"ur Mathematik, TU Berlin, 2017.

\bibitem{Sty06}
{\sc T.~Stykel}, {\em Balanced truncation model reduction for semidiscretized
  {S}tokes equation}, Linear Algebra and its Applications, 415 (2006),
  pp.~262--289.

\bibitem{Sty06b}
\leavevmode\vrule height 2pt depth -1.6pt width 23pt, {\em On some norms for
  descriptor systems}, IEEE Transactions on Automatic Control, 51 (2006),
  pp.~842--847.

\bibitem{Sch13}
{\sc A.~{van der S}chaft}, {\em Port-{H}amiltonian differential-algebraic
  systems}, in Surveys in Differential-Algebraic Equations. {I}, Springer,
  Heidelberg, 2013, pp.~173--226.

\bibitem{SchJ14}
{\sc A.~{van der S}chaft and D.~Jeltsema}, {\em Port-{H}amiltonian systems
  theory: {A}n introductory overview}, Foundations and Trends in Systems and
  Control, 1 (2014), pp.~173--378.

\bibitem{SchM18}
{\sc A.~{van der S}chaft and B.~Maschke}, {\em Generalized port-{H}amiltonian
  {DAE} systems}, Systems and Control Letters, 121 (2018), pp.~31--37.

\bibitem{Var91}
{\sc A.~Varga}, {\em Balancing free square-root algorithm for computing
  singular perturbation approximations}, in Proceedings 30th IEEE Conference on
  Decision and Control, {IEEE}, 1991, pp.~1062--1065 vol.2.

\bibitem{WolLEK10}
{\sc T.~Wolf, B.~Lohmann, R.~Eid, and P.~Kotyczka}, {\em Passivity and
  structure preserving order reduction of linear port-{H}amiltonian systems
  using {K}rylov subspaces}, European Journal of Control, 16 (2010),
  pp.~401--406.

\end{thebibliography}
\end{document}